\def\@settitle{\begin{center}%
    \baselineskip14\p@\relax
    \bfseries\textsc
    \@title
  \end{center}%
}
\newtheorem{propos}{Proposition}
\newtheorem{lemma}{Lemma}
\newtheorem{theorem}{Theorem}
\newtheorem{corollary}{Corollary}
\theoremstyle{definition}
\newtheorem{definition}{Definition}
\newtheorem{example}{Example}
\newtheorem{prooff}{Proof}
\theoremstyle{remark}
\newtheorem {remark}{Remark}
\DeclareMathOperator{\Spec}{Spec}
\DeclareMathOperator{\SL}{SL}
\DeclareMathOperator{\cone}{cone}
\DeclareMathOperator{\Supp}{Supp}
\def\GG{{\mathbb G}}
\def\CC{{\mathbb C}}
\def\KK{{\mathbb K}}
\def\TT{{\mathbb T}}
\def\ZZ{{\mathbb Z}}
\def\QQ{{\mathbb Q}}
\def\AA{{\mathbb A}}
\def\cone{\text{Cone}}
\def\Ga{\mathbb{G}_a}
\def\l{\lambda}
\def\m{\mu}
\newcommand{\mf}{\mathfrak}
\newcommand{\df}{\partial}
\newcommand{\ph}{\varphi}
\begin{document}
%\date{}
\title[Commuting homogeneous locally nilpotent derivations]{Commuting homogeneous locally nilpotent derivations}
\author{Dmitry Matveev}
%\thanks{The research was supported by the grant RSF-DFG 16-41-01013}
\address{Lomonosov Moscow State University, Faculty of Mechanics and Mathematics, Department of Higher Algebra, Leninskie Gory 1, Moscow 119991, Russia \newline
\newline
\indent National Research University Higher School of Economics, Faculty of Computer Science, Kochnovskiy Proezd 3, Moscow, 125319 Russia
\newline}
\email{dmitry.a.matveev@yandex.ru}

\keywords{$\TT$-variety, graded algebra, locally nilpotent derivation,  additive group action.}

\subjclass[2010]{13N15, 14R20, 14J50}

\begin{abstract}
Let $X$ be an affine algebraic variety endowed with an action of complexity one of an algebraic torus $\TT$. It is well known that homogeneous locally nilpotent derivations on the algebra of regular functions $\KK[X]$ can be described in terms of proper polyhedral divisors corresponding to $\TT$-variety $X$. We prove that homogeneous locally nilpotent derivations commute if an only if some combinatorial criterion holds. These results are used to describe actions of unipotent groups of dimension two on affine $\TT$-varieties.
\end{abstract}

\maketitle

\section{Introduction}
\label{s1}

Let $\mathbb{K}$ be an algebraically closed field of characteristic zero, $\mathbb{T} = (\mathbb{K}^{\times})^n$ be an algebraic torus, $M = \mathfrak{X}(\mathbb{T}) \cong \mathbb{Z}^n$ be the character lattice of the torus, $N = \text{Hom}(M,\mathbb{Z})$ be the dual lattice to $M$, and $N_{\mathbb{Q}} = N \otimes \mathbb{Q}$.

A normal affine variety $X$ endowed with an effective regular action of the torus $\mathbb{T}$ is called $\mathbb{T}$-variety. The codimension of a general orbit of the action is said to be the complexity of the $\mathbb{T}$-action. Since the action of the torus is effective, with the notation above the complexity of the action on $X$ equals $ \dim X - \text{rank} \: M $.

If the complexity of the action is zero, then the torus acts on $ X $ with an open orbit and the variety is said to be toric. A combinatorial description of toric varieties is well known (see \cite{Fu} and \cite{CLS}). The description of affine $ \mathbb{T} $-varieties in the case of arbitrary complexity was obtained by K.~Altmann and J.~Hausen (see \cite{AH}). They proposed to assign $ \mathbb{T} $-varieties in terms of so-called proper polyhedral divisors.

Any affine toric variety corresponds to some polyhedral cone $ \sigma \subseteq N_{\mathbb{Q}} $. For an affine $ \mathbb{T} $-variety, an analogous description is more complicated. An affine $ \mathbb{T} $-variety corresponds to the triple $ (Y, \sigma, \mathfrak{D}) $, where $ Y $ is a normal semiprojective variety, $ \sigma $ is still a polyhedral cone, and $ \mathfrak{D} $ is a divisor on $ Y $ whose coefficients are polyhedra with a recession cone $\sigma$.

Let $ \mathbb{G}_a = (\mathbb{K}, +) $ be an additive group of the ground field, and let an affine $ \mathbb{T} $-variety $ X $ be endowed with an action of this group normalized by the torus $ \mathbb{T} $ in the automorphism group $ \text{Aut} (X) $. If general orbits of $ \mathbb{G}_a $-action  are contained in closures of general torus orbits, then it is said that the action is \textit{of vertical type}, and \textit{of horizontal type} otherwise.

It is known that $ \mathbb{G}_a $-actions on an affine variety $ X $ correspond to locally nilpotent derivations of the algebra $ \mathbb{K} [X] $ and $ \mathbb{G}_a $-actions normalized by a torus correspond to locally nilpotent derivations on $ \mathbb{K} [X] $ that are homogeneous with respect to the grading induced by the action of the torus $ \mathbb{T} $. The description of homogeneous locally nilpotent derivations in terms of proper polyhedral divisors and the so called Demazure roots is obtained by A.~Liendo for actions of vertical type of arbitrary complexity (see \cite{Li2}) and for actions of horizontal type of complexity one (see \cite{Li}, \cite{AL}). In addition, \cite{AL} describes pairs of $ \mathbb{G}_a $-subgroups corresponding to the root subgroups of the $ \text{SL}_2 \, (\KK) $ and $ \text{PSL}_2 \, (\KK) $ actions.

An objective of this paper is to describe $ \mathbb{G}_a^2 $-actions on the affine $ \mathbb{T} $-variety $ X $ in the same terms. In order to do that we need to describe pairs of commuting homogeneous locally nilpotent derivations.

Since locally nilpotent derivations can be either of vertical or of horizontal type, we have to consider three cases: \\* [\smallskipamount]
    (1) both derivations are of vertical type; \\
    (2) one derivation is of the vertical type, the another one is of horizontal type; \\
    (3) both derivations are of horizontal type.
\\* [\smallskipamount]
Case (1) is the simplest one and has already been investigated, for example, in \cite{Ro}. In cases (2) and (3) the torus action of complexity one is under consideration, because locally nilpotent derivations of horizontal type are classified only under this assumption. 

The existence of homogeneous locally nilpotent derivations of horizontal type imposes strong restrictions on $ Y $. The variety $ Y $ has to be isomorphic to $ \mathbb{A}^1 $ or $ \mathbb{P}^1 $. Thus, affine and projective subcases are distinguished.

The main result of this paper is a criteria to figure out whether homogeneous locally nilpotent derivations commute. The criteria is obtained in case (2) and in the affine subcase of case (3).

Sections 2-4 contain the necessary preliminary information. These sections are expounded in accordance with \cite{AL}. Section 2 is devoted to the combinatorial description of affine $ \mathbb{T} $-varieties in terms of proper polyhedral divisors. Section 3 explains the connection between locally nilpotent derivations on the algebra of regular functions of an affine $ \mathbb{T} $-variety and $ \mathbb{G}_a^2 $-actions on the variety. A description of locally nilpotent derivations in terms of proper polyhedral divisors is given in Section 4.

Sections 5-7 outline the main results of the paper, Section 5 deals with derivations of the vertical type, Section 6 investigates the conditions for commuting derivations of different types, Section 7 discusses derivations of horizontal type. Technical statements and auxiliary statements are included in the Appendix (Section 8).

The author is grateful to his supervisor Ivan~Arzhantsev for posing the problem and constant attention and support.

\section{Combinatorical desription of $\mathbb{T}$-varieties}\label{s2}

A complete combinatorial description of normal affine varieties with effective torus action is presented in \cite{AH}. In this section we introduce the necessary definitions and formulate the main results of this paper.

Let $ M = \ZZ^n $ be a lattice of rank $ n $ and $ N = \text{Hom} (M, \mathbb{Z}) $ be its dual lattice. Consider the spaces $ M_{\mathbb{Q}} = M \otimes \mathbb{Q} $ and $ N_{\mathbb{Q}} = N \otimes \mathbb{Q} $. The natural pairing is defined as follows $ M_{\mathbb{Q}} \times N_{\mathbb{Q}} \to \mathbb{Q} $, $ (m, p) \mapsto \langle m, p \rangle = p (m) $.

Let $ \sigma $ be a polyhedral cone in $ N_{\mathbb{Q}} $. The set of polyhedra in $ N_{\mathbb{Q}} $ that can be represented as a Minkowski sum of a cone $ \sigma $ and some polytope $ \Pi $ will be denoted as $ \text{Pol}_{\sigma} (N_{ \mathbb{Q}}) $.

A cone
$$
\sigma^{\lor} = \{m \in M ​​_{\mathbb{Q}} \mid \langle m, u \rangle \ge 0, \; \forall u \in \sigma \} \subset M_{\mathbb{Q}}
$$
is said to be the dual cone of $ \sigma \subseteq N_{\mathbb{Q}} $.

To each polyhedron $ \Delta \in \text{Pol}_{\sigma} (N_{\mathbb{Q}}) $ we associate the support function $ h_{\Delta}: \sigma^{\lor} \to \mathbb{ Q} $ defined as follows:
$$
h_{\Delta} (m) = \min \langle m, \Delta \rangle: = \min_{\substack{p \in \Delta}} \langle m, p \rangle.
$$
The minimum is attained because $ m $ is nonnegative on $ \sigma $. Since the function is considered on a polyhedron, the minimum is always reached at the vertex. Denoting $ \{v_i \} $ by the set of vertices of the polyhedron $ \Delta $, we obtain the expression
$$
h_{\Delta} (m) = \min_{\substack{i}} \{v_i (m) \} \quad \text{for all } m \in \sigma^{\lor}.
$$

%The variety $ \text{Proj} \, A $ is defined as the set of all simple homogeneous ideals not contained entirely in the ideal $ A_+ = \sum_{d> 0} A_d $. A description of the construction Proj (\cite [Section 2]{Ha}) is given which allows one to construct a projective variety over a ring with $ Z_{\ge 0} $-grading.

The definition of a projective variety over a scheme is given in \cite [Chapter 4]{Ha}. The normal total variety $ Y $ constructed over the scheme is called \textit{semiprojective}.

An irreducible subvariety of codimension one of the variety $X$ is called a prime divisor. The elements of the free Abelian group generated by all prime divisors of the variety $ X $ are called the Weil divisors. We say that the divisor is principal if it is a divisor of some rational function on the variety $ X $. A Weil divisor is called a Cartier divisor or a locally principal divisor if there exists a covering of the variety $ X $ by open sets $ U_i $ such that the restriction of the divisor to every $ U_i $ is a principal divisor. A formal sum of prime divisors with rational coefficients is called $ \QQ $-Cartier divisor if some its multiple is a Cartier divisor.

\begin{definition} \label{pp-div}
We call \textit{$ \sigma $-polyhedral divisor} on $ Y $ a formal sum $ \mathfrak{D} = \sum_{Z \subseteq Y} \Delta_Z \cdot Z $, where $ Z $ are prime divisors on $ Y $, polyhedra $ \Delta_Z \in \text{Pol}_{\sigma} (N_{\mathbb{Q}}) $ and $ \Delta_Z = \sigma $ for all but finitely many~$ Z $. For each $ m \in \sigma^{\lor} $, we can evaluate the divisor $ \mathfrak{D} $ at the point $ m $:
$$
\mathfrak{D} (m) = \sum_{\substack{Z \subseteq Y}} h_Z (m) \cdot Z,
$$
where $ h_Z $ is the support function of the polyhedron $ \Delta_Z $. A $ \sigma $-polyhedral divisor is said to be \textit{proper} if the following two conditions are satisfied: \\* [\smallskipamount]
(1) $ \mathfrak{D} (m) $ is semiample and $ \mathbb{Q} $-Cartier for all $ m \in \sigma^{\lor}; $ \\
(2) $ \mathfrak{D} (m) $ is big for any $ m \in \text{rel.int} (\sigma^{\lor}) $. \\* [\smallskipamount]
Here $ \text{rel.int} (\sigma^{\lor}) $ denotes the relative interior of the cone $ \sigma^{\lor} $. A $ \mathbb{Q} $- Cartier divisor $ D \subseteq Y $ is said to be \textit{semiample} if there exists an integer $ r> 0 $ such that $ rD $ is base point free (that is the divisor has no point lying in the intersection of the supports of all the divisors that are linearly equivalent to $D$ ), and is called \textit{big} if for some $ r> 0 $ there is a linearly equivalent to $ rD $ divisor $ D_0 $ such that $ Y \setminus \Supp D_0 $ is affine.
\end{definition}

Let $ \mathbb{T} = \Spec \mathbb{K} [M]$ be an $ n $-dimensional algebraic torus with the character lattice $ M $ and $ X = \Spec A $ be an affine $ \mathbb{T} $-variety. The torus action $ \mathbb{T} \times X \to X $ corresponds to a homomorphism of algebras $ A \to A \otimes \mathbb{K} [M] $ that defines $ M $-grading on $ A $. Conversely, every $ M $-grading corresponds to the action of the torus $ \mathbb{T} = \Spec \mathbb{K} [M] $. We denote by $ \sigma^{\lor}_M $ the semigroup $ \sigma^{\lor} \cap M $ and by $ \chi^m $ the character of the torus $ \mathbb{T} $ corresponding to the vector $ m $ of the lattice $ M $.

For a rational divisor $ D $ we consider the set
$$
H^0 (U, \mathcal{O}_Y (D)) = \{f \in \KK (Y) \mid \text{div} \, f |_U + D |_U \ge 0 \}.
$$

Now we are ready to formulate the theorem (\cite [Section 3]{AH}) describing $ \mathbb{T} $-varieties in terms of proper polyhedral divisors.

\begin{theorem} \label{AH-Th}
To any proper $ \sigma $-polyhedral divisor $ \mathfrak{D} $ on a semiprojective variety $ Y $ one can associate a normal affine $ \mathbb{T} $-variety $ X [Y, \mathfrak{D}] = \textup{Spec } \: A [Y, \mathfrak{D}] $ of dimension $ \textup{rank} \: M + \textup{dim} \: Y $, where
$$
A [Y, \mathfrak{D}] = \bigoplus_{\substack{m \in \sigma^{\lor}_M}} A_m \chi^m \quad \text{and} \quad A_m = H^0 ( Y, \mathcal{O}_Y (\mathfrak{D} (m)) \subseteq \mathbb{K} (Y).
$$
Conversely, any normal affine $ \mathbb{T} $-variety is isomorphic to $ \mathbb{T} $-variety $ X [Y, \mathfrak{D}] $ for some semiprojective variety $ Y $ and some proper $ \sigma $-polyhedral divisor $ \mathfrak{D} $ on $ Y $.
\end{theorem}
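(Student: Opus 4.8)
The plan is to follow the two-stage construction of Altmann and Hausen. First I would establish that the $M$-graded algebra $A[Y,\mathfrak{D}] = \bigoplus_{m \in \sigma^{\lor}_M} A_m \chi^m$ is well defined and has the right ring structure. The key algebraic point is that multiplication respects the grading: one needs $A_m \cdot A_{m'} \subseteq A_{m+m'}$, which follows from the super-additivity of the support functions, namely $h_{\Delta_Z}(m) + h_{\Delta_Z}(m') \le h_{\Delta_Z}(m+m')$ for each prime divisor $Z$ (a consequence of $\min\langle m,\Delta\rangle + \min\langle m',\Delta\rangle \le \min\langle m+m',\Delta\rangle$ on a fixed polyhedron). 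Concretely, if $f \in H^0(Y,\mathcal{O}_Y(\mathfrak{D}(m)))$ and $g \in H^0(Y,\mathcal{O}_Y(\mathfrak{D}(m')))$, then $\operatorname{div}(fg) + \mathfrak{D}(m+m') = \operatorname{div} f + \operatorname{div} g + \mathfrak{D}(m+m') \ge \operatorname{div} f + \operatorname{div} g + \mathfrak{D}(m) + \mathfrak{D}(m') \ge 0$. Together with $A_0 \ni 1$ (using that $Y$ is a variety, so $H^0(Y,\mathcal{O}_Y) = \mathbb{K}$ when $Y$ is additionally assumed connected, or more generally a finite product of fields) this makes $A[Y,\mathfrak{D}]$ a commutative $M$-graded $\mathbb{K}$-algebra.

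Next I would prove that $A[Y,\mathfrak{D}]$ is a normal, finitely generated $\mathbb{K}$-algebra of the asserted dimension, so that $X[Y,\mathfrak{D}] = \operatorname{Spec} A[Y,\mathfrak{D}]$ is a normal affine $\mathbb{T}$-variety. Normality is the easiest part: each $A_m$ is an intersection of valuation rings inside $\mathbb{K}(Y)$, so $A[Y,\mathfrak{D}]$ is an intersection of normal rings inside the field $\mathbb{K}(Y)(M)$ of Laurent series, hence normal. For finite generation one uses the properness hypotheses: semiampleness and $\mathbb{Q}$-Cartierness of $\mathfrak{D}(m)$ for $m \in \sigma^{\lor}_M$ guarantee that the Cox-type ring $\bigoplus_m H^0(Y,\mathcal{O}_Y(\mathfrak{D}(m)))$ is finitely generated (this is where one invokes that $Y$ is semiprojective and a theorem on finite generation of section rings of semiample divisors); bigness on $\operatorname{rel.int}(\sigma^{\lor})$ ensures that the torus action on $X[Y,\mathfrak{D}]$ is effective and that the quotient field has transcendence degree $\operatorname{rank} M + \dim Y$ over $\mathbb{K}$, giving the dimension count. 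I would compute $\dim X[Y,\mathfrak{D}]$ via the generic fiber of the rational quotient map $X \dashrightarrow Y$: the generic fiber is a torus of dimension $\operatorname{rank} M$ (here bigness is exactly what forces $h_Z$ to be finite and the generic evaluation to be full-dimensional), and $\dim Y$ comes from the base.

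For the converse, starting from an arbitrary normal affine $\mathbb{T}$-variety $X = \operatorname{Spec} A$ with its $M$-grading $A = \bigoplus_{m} A_m$, I would first recover $\sigma$: the weight monoid $\{m : A_m \ne 0\}$ generates a cone whose dual is $\sigma$, and saturation/normality makes this the lattice points of $\sigma^{\lor}$. Then one constructs $Y$ as (a semiprojective model of) the rational quotient $X /\!\!/ \mathbb{T}$, i.e. one takes $\mathbb{K}(Y)$ to be the degree-zero part of the field of fractions of $A$, picks a suitable projective-over-affine model, and for each $m$ identifies $A_m$ with $H^0(Y,\mathcal{O}_Y(D_m))$ for a well-chosen $\mathbb{Q}$-divisor $D_m$; the assignment $m \mapsto D_m$ is piecewise linear and concave, which is precisely the data of a polyhedral divisor $\mathfrak{D}$ via Legendre-type duality between concave piecewise-linear functions on $\sigma^{\lor}$ and $\operatorname{Pol}_\sigma(N_{\mathbb{Q}})$-valued divisors. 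Checking that $\mathfrak{D}$ is proper reverses the implications above, and checking $X \cong X[Y,\mathfrak{D}]$ is then a matter of matching graded pieces. The main obstacle is the converse direction, and within it the construction of the model $Y$ together with the divisors $D_m$: one must make consistent choices so that the $D_m$ vary linearly in $m$ on the cone $\sigma^{\lor}$ and so that $Y$ can be taken semiprojective; this requires a careful chamber decomposition of $\sigma^{\lor}$ and an appeal to the existence of common resolutions / normal models, and it is the technical heart of \cite[Section 3]{AH}.
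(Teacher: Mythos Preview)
The paper does not contain a proof of this theorem: it is quoted as background from Altmann and Hausen \cite[Section~3]{AH} and used without further justification. Your outline is a reasonable summary of the strategy in \cite{AH} (super-additivity of support functions for the ring structure, semiampleness for finite generation, bigness for the dimension and effectivity, and the piecewise-linear reconstruction of $\mathfrak{D}$ from the graded pieces in the converse direction), so there is nothing in the paper to compare it against beyond the citation itself.

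One small inaccuracy worth flagging: your remark that $H^0(Y,\mathcal{O}_Y)=\mathbb{K}$ is not part of the setup here, since $Y$ is only semiprojective (e.g.\ $Y=\mathbb{A}^1$ in the paper's main applications, where $A_0=\mathbb{K}[t]$); all that is needed for $A[Y,\mathfrak{D}]$ to be a unital algebra is $1\in A_0$, which is immediate. Likewise, the normality argument is not literally ``an intersection of valuation rings'' on each graded piece, but rather that the whole graded algebra sits inside $\mathbb{K}(Y)[M]$ and is saturated there; the correct statement is in \cite{AH}, and your sketch captures the right idea.
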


The description is not unique, but uniqueness can be achieved by imposing some minimality  conditions on the pair $ (Y, \mathfrak{D}) $ (see \cite [Section 8]{AH}).

\begin{corollary} \label{AH-crl}
Let $ \mathfrak{D} $ and $ \mathfrak{D} '$ be the proper $ \sigma $-polyhedral divisors on the normal semiprojective variety $ Y $. If for any prime divisor $ Z $ in $ Y $ there exists a vector $ v_Z \in N $ such that
$$
\mathfrak{D} = \mathfrak{D} '+ \sum_{\substack{Z}} (v_Z + \sigma) \cdot Z \quad \text{and} \quad \text{divisor} \sum_{\substack{ Z}} \langle m, v_Z \rangle \cdot Z \text{main} \forall m \in \sigma^{\lor}_M,
$$
then the variety $ X [Y, \mathfrak{D}] $ is equivariantly isomorphic to $ X [Y, \mathfrak{D} '] $.
\end{corollary}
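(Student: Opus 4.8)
The plan is to prove the stronger statement that $A[Y,\mathfrak{D}]$ and $A[Y,\mathfrak{D}']$ are isomorphic as $M$-graded $\KK$-algebras; by the correspondence between $M$-gradings and $\TT$-actions recalled above, such an isomorphism induces a $\TT$-equivariant isomorphism of the varieties $X[Y,\mathfrak{D}]$ and $X[Y,\mathfrak{D}']$. The first step is to record how the correction term acts on the evaluation of $\mathfrak{D}$: for a polyhedron $v_Z+\sigma$ and $m\in\sigma^{\lor}$ one has $h_{v_Z+\sigma}(m)=\langle m,v_Z\rangle+\min_{u\in\sigma}\langle m,u\rangle=\langle m,v_Z\rangle$, so the hypothesis $\mathfrak{D}=\mathfrak{D}'+\sum_Z(v_Z+\sigma)\cdot Z$ gives
$$\mathfrak{D}(m)=\mathfrak{D}'(m)+D_m,\qquad D_m:=\sum_Z\langle m,v_Z\rangle\cdot Z,$$
for all $m\in\sigma^{\lor}_M$, and by assumption every divisor $D_m$ is principal.

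The main step is to realize the $D_m$ by rational functions in a way compatible with addition in $M$. The assignment $m\mapsto D_m$ is $\ZZ$-linear in $m$ and its image consists of principal divisors (the set of $m\in M$ with $D_m$ principal is a subgroup of $M$ containing $\sigma^{\lor}_M$), so it defines a homomorphism from the subgroup $M'\subseteq M$ generated by $\sigma^{\lor}_M$ to the group $\operatorname{PDiv}(Y)$ of principal divisors on $Y$. Since $M'$ is a free abelian group, this homomorphism lifts along the surjection $\operatorname{div}\colon\KK(Y)^{\times}\to\operatorname{PDiv}(Y)$; restricting the lift to $\sigma^{\lor}_M$ we obtain functions $f_m\in\KK(Y)^{\times}$, $m\in\sigma^{\lor}_M$, with $\operatorname{div}f_m=D_m$ and $f_mf_{m'}=f_{m+m'}$. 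Securing this multiplicativity is the only real point of the argument; the remaining verifications are routine.

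It remains to write down the isomorphism. From $\mathfrak{D}(m)=\mathfrak{D}'(m)+\operatorname{div}f_m$ we obtain, for each $m\in\sigma^{\lor}_M$,
$$A_m=\{f\in\KK(Y)\mid\operatorname{div}f+\mathfrak{D}(m)\ge0\}=\{f\in\KK(Y)\mid\operatorname{div}(ff_m)+\mathfrak{D}'(m)\ge0\}=f_m^{-1}A'_m,$$
so multiplication by $f_m$ is a $\KK$-linear isomorphism $A_m\xrightarrow{\ \sim\ }A'_m$. Define $\Phi\colon A[Y,\mathfrak{D}]\to A[Y,\mathfrak{D}']$ on homogeneous components by $\Phi(a\chi^m)=(af_m)\chi^m$. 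It is a bijection respecting the grading, and since $A_m\cdot A_{m'}\subseteq A_{m+m'}$ and $f_mf_{m'}=f_{m+m'}$ it is a ring homomorphism; hence $\Phi$ is an isomorphism of $M$-graded $\KK$-algebras. Passing to spectra gives the desired $\TT$-equivariant isomorphism $X[Y,\mathfrak{D}]\cong X[Y,\mathfrak{D}']$.
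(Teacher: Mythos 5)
Your argument is correct. Note that the paper does not prove this statement at all: it is stated as a corollary imported from \cite[Section 8]{AH}, so there is no in-paper proof to compare with; your write-up is essentially the standard Altmann--Hausen argument. The two points that actually need care are both handled properly. First, additivity of support functions under Minkowski sum together with $h_{v_Z+\sigma}(m)=\langle m,v_Z\rangle$ for $m\in\sigma^{\lor}$ gives $\mathfrak{D}(m)=\mathfrak{D}'(m)+D_m$ with $D_m=\sum_Z\langle m,v_Z\rangle\cdot Z$ an \emph{integral} divisor (since $v_Z\in N$, $m\in M$), so the identification $A_m=f_m^{-1}A'_m$ is unaffected by any rounding issues for the $\mathbb{Q}$-divisors $\mathfrak{D}(m)$. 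Second, the multiplicative system $f_mf_{m'}=f_{m+m'}$ is the real content, and your construction is sound: $\{m\in M\mid D_m\ \text{principal}\}$ is a subgroup containing $\sigma^{\lor}_M$, the subgroup $M'$ it generates is free abelian, hence the homomorphism $m\mapsto D_m$ into principal divisors lifts through the surjection $\operatorname{div}\colon\KK(Y)^{\times}\to\operatorname{PDiv}(Y)$ by choosing preimages on a basis. The resulting map $a\chi^m\mapsto(af_m)\chi^m$ is then visibly a graded algebra isomorphism, and gradedness is exactly $\TT$-equivariance on spectra. (Implicitly you use, as the statement itself does, that $v_Z=0$ for all but finitely many $Z$ so the formal sums make sense; it would not hurt to say so.)
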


\begin{example} \label{ex01}
We consider the one-dimensional lattice $ \ZZ $ of the characters of the torus $ \TT = \KK^{\times} $, the cone $ \sigma = \{0 \} $ and the divisor $ \mf{D} = [0,1] \cdot \{0 \} $ on the affine line $ Y = \AA^1 $. Then
$$
\mf{D} (m) = \begin{cases}
0 \cdot \{0 \}, & m \ge 0 \\
m \cdot \{0 \}, & m <0;
\end{cases}
$$
$$
A_m = \begin{cases}
\mathbb{K} [t], & m \ge 0 \\
t^{-m} \mathbb{K} [t], & m <0.
\end{cases}
$$
We note that $ x = \chi $ and $ y = t \chi^{-l} $ are generators of the algebra $ A = \bigoplus_{\substack{m}} A_m \chi^m $ that do not satisfy any algebraic relation. Thus, these combinatorial data define an affine plane with the action of the torus $ \TT $ on $ \KK [x, y] $, where $ \lambda \circ x = \lambda x $ and $ \lambda \circ y = \lambda^{-1} y $.
\end{example}

\begin{example}\label{ex02}

Consider the example proposed by A.~Liendo in \cite{Li}. Suppose $ N = \ZZ^2 $ and $ \sigma = \{(0,0) \} $. In $ N_{\mathbb{Q}} = \mathbb{Q}^2 $ we take the triangle $ \Delta_0 $ with the vertices $ (0,0) $, $ (0,1) $, $ (- 1/4 , -1) $ and the divisor $ \Delta_1 = \{0 \} \times [0,1] $ (Figure 1).

\begin{center}
\begin{tikzpicture}
\draw[gray,-latex] (-1.5,0) -- (1.5,0);
\draw[gray,-latex] (0,-1.5) -- (0,1.5);
\fill (0,0) -- (0,1.2) -- (-0.3,-1.2) -- (0,0);
\draw[densely dotted] (-0.3,-1.2) -- (-0.3,0);
\draw[densely dotted] (-0.3,-1.2) -- (0,-1.2);
\draw (0.2,1.2) node {\small $1$};
\draw (0.25,-1.2) node {\small $-1$};
\draw (-0.5,0.3) node {\small $-\frac{1}{4}$};
\draw (1,0.4) node {\small $\Delta_0 \subseteq N_{\mathbb{Q}}$};
\end{tikzpicture} \qquad \qquad \qquad \qquad
\begin{tikzpicture}
\draw[gray,-latex] (-1.5,0) -- (1.5,0);
\draw[gray,-latex] (0,-1.5) -- (0,1.5);
\draw[very thick] (0,0) -- (0,1.25);
\draw (0.25,1.25) node {\small $1$};
\draw (1,0.4) node {\small $\Delta_1 \subseteq N_{\mathbb{Q}}$};
\end{tikzpicture}
\\ \small{\textsc{Fig. 1}}
\end{center}
\vspace{0.5em}

Consider $ Y = \mathbb{A}^1 $ and $ \mf{D} = \Delta_0 \cdot \{0 \} + \Delta_1 \cdot \{1 \} $. Figure 2 shows $ M_{\mathbb{Q}} $ divided into 4 sectors depending on the support functions $ h_{\Delta_0} $ and $ h_{\Delta_1} $ values on $ m = (m_1, m_2) \in \sigma^{\lor} = M_{\mathbb{Q}} $.

\begin{center}
\begin{tikzpicture}
\draw[gray,-latex] (-2,0) -- (2,0);
\draw[gray,-latex] (0,-1) -- (0,1);
\draw[-latex, thick] (0,0) -- (-1.5,0.6);
\draw[-latex, thick] (0,0) -- (-1.5,0);
\draw[-latex, thick] (0,0) -- (1.5,0);
\draw[-latex, thick] (0,0) -- (1.5,-0.4);
\draw (0.7,0.7) node {\small $(1)$};
\draw (-0.5,-0.6) node {\small $(3)$};
\draw (-1.7,0.3) node {\small $(2)$};
\draw (1.7,-0.25) node {\small $(4)$};
\draw[gray] (1.6,-0.7) node {\footnotesize $\mathbb{Q}_{\ge 0}(8,-1)$};
\draw[gray] (1.6,0.3) node {\footnotesize $\mathbb{Q}_{\ge 0}(1,0)$};
\draw[gray] (-1.6,-0.3) node {\footnotesize $\mathbb{Q}_{\ge 0}(-1,0)$};
\draw[gray] (-1.4,0.8) node {\footnotesize $\mathbb{Q}_{\ge 0}(-4,1)$};
\end{tikzpicture} \qquad
\begin{tikzpicture}
\draw (0,1.7) node[anchor=west] {(1) $h_{\Delta_0}(m) = -\frac{1}{4} m_1 - m_2$, $h_{\Delta_1}(m) = 0$};
\draw (0,1.1) node[anchor=west] {(2) $h_{\Delta_0}(m) = 0$, $h_{\Delta_1}(m) = 0$};
\draw (0,0.5) node[anchor=west] {(3) $h_{\Delta_0}(m) = m_2$, $h_{\Delta_1}(m) = m_2$};
\draw (0,-0.1) node[anchor=west] {(4) $h_{\Delta_0}(m) = -\frac{1}{4} m_1 - m_2$, $h_{\Delta_1}(m) = m_2.$};
\end{tikzpicture}
\\ \small{\textsc{Fig. 2}}
\end{center}
\vspace{0.5em}

By calculating $ A_m = H^0 (Y, \mathcal{O}_Y (\mathfrak{D} (m)) $ for each case, we obtain the following collection of generators of the algebra $ A [\mathbb{A}^1, \mathfrak{D}] = \bigoplus_{\substack{m}} A_m \chi^m $:
$$
u_1 = -t \chi^{(4,0)}, \quad
u_2 = \chi^{(- 1,0)}, \quad
u_3 = - \chi^{(- 4,1)}, \quad
u_4 = t (t-1) \chi^{(8, -1)}. \quad
$$
It is easy to verify that the generators satisfy the unique irreducible relation \mbox{$ u_1 + u_1^2u_2^4 + u_3u_4 = 0 $}. Thus, there is an isomorphism
\begin{equation} \label{ex2-alg-iso}
A [\mathbb{A}^1, \mathfrak{D}] \cong \mathbb{K} [x_1, x_2, x_3, x_4] / (x_1 + x_1^2x_2^4 + x_3x_4)
\end{equation}
with the following $ \ZZ^2 $-grading on the algebra $ A [\AA^1, \mathfrak{D}] $:
$$
\deg x_1 = (4,0), \: \deg x_2 = (-1,0), \: \deg x_3 = (-4,1), \: \deg x_4 = (8, -1).
$$

\end{example}

\section{Locally nilpotent derivations and $\mathbb{G}_a$-actions}
\label{s3}

Let $X = \text{Spec}\,A$ be an affine variety over the ground field $\KK$. By $\mathbb{G}_a$ we denote an additive group of the field $\mathbb{K}$. A derivation $\df$ on $A$ is said to be locally nilpotent if for any $a \in A$ there exists $n \in \ZZ_{> 0}$ such that $\df^n(a)=0$. A locally nilpotent derivation $\df$ on $A$ corresponds to an action of $\mathbb{G}_a$ on $A$ defined by $\phi_{\df}: \mathbb{G}_a\times A \to A$, $\phi_{\df}: (t,f) \mapsto \exp(t\df)(f)$. Then it follows that locally nilpotent derivation corresponds to an action of $\mathbb{G}_a$ on $X=\text{Spec}\,A$. And vice versa an action of $\mathbb{G}_a$ on $X$ arises from some locally nilpotent derivation on $A$ (see \cite[Section 1.5]{Fr}).

Let $A = A [Y, \ mf{D}]$ be the algebra with $M$-grading from the theorem~\ref{AH-Th}. A locally nilpotent derivation $\df$ is said to be homogeneous with respect to $M$-grading if it sends homogeneous elements into homogeneous elements. In this case, the degree $\deg \df = \deg \df(f) - \deg f$ of a locally nilpotent derivation $\df$ is well defined for any homogeneous element $f \in A \setminus \ker \df$. It is said that the $\mathbb{G}_a$-action is normalized by the torus $\mathbb{T}$ if the group $\mathbb{G}_a$ is normalized by the torus as a subgroup in the automorphism group $\text{Aut}(X)$. The $\mathbb{G}_a$-action is normalized by the torus $\mathbb{T}$ if and only if the corresponding locally nilpotent derivation $\df$ is homogeneous with respect to $M$-grading.

Next, consider the group $\mathbb{G}_a^2$ acting on the variety $X$. Since $\mathbb{G}_a^2 = \mathbb{G}_a \times \mathbb{G}_a$ specifying the actions of this group is equivalent to specifying the actions of two commuting groups $\mathbb{G}_a$. In addition, if $\mathbb{G}_a^2$ is normalized by the torus $\mathbb{T}$ then there is an action of $\mathbb{T}$ on the Lie algebra $\text{Lie}(\mathbb{G}_a^2) = \mathfrak{g}$. The action of the torus $\mathbb{T}$ is diagonalizable on $\mathfrak{g}$. Therefore, in $\mathfrak{g}$ we can choose an eigen basis $\{x_1, x_2\}$. Then $\mathfrak{g}$ splits into the direct sum of one-dimensional components: $ \mathfrak{g} = \mathfrak{g}_1 \oplus \mathfrak{g}_2 = \langle x_1 \rangle \oplus \langle x_2 \rangle$, and $\{\text{exp}\,(tx_i)\mid t \in \mathbb{K}\}$ are isomorphic to $\mathbb{G}_a$ and are normalized by the torus. Thus, the assignment of a $\mathbb{T}$-normalizable $\mathbb{G}_a^2$-action to $X$ is equivalent to specifying a pair of commuting homogeneous locally nilpotent derivations.

Any homogeneous locally nilpotent derivation $\df$ on the algebra $A$ can be extended to the quotient field $\text{Quot}(A)$. The action of the torus $\mathbb{T}$ on $\text{Quot}(A)$ extends similarly. Consider the subfield of the rational invariants $\text{Quot}(A)^{\mathbb{T}} = \mathbb{K}(Y)$. It is said that locally nilpotent derivation $\df$ is of \textit{vertical type} if $\df(\mathbb{K}(Y)) = 0$, and of \textit{horizontal type} otherwise. The fact that derivation $\df$ is of vertical type geometrically means that the typical orbits of the corresponding $\mathbb{G}_a$-action lie in the $\mathbb{T}$-action typical orbits closures.

\section{Locally nilpotent derivations on affine $\mathbb{T}$-varieties}
\label{s4}

Firstly let us consider the classification of homogeneous locally nilpotent derivations of vertical type described on $\mathbb{T}$-varieties of arbitrary complexity described in \cite{Li2}. By $\sigma(1)$ denote the set of rays (a facet of dimension 1) of cone $\sigma$. Further we suppose $\rho \in \sigma(1)$ denoting a ray and its primitive vector of the lattice.

\begin{definition}\label{dem}
Let $\sigma$ be a pointed cone in $N_{\mathbb{Q}}$. A vector $e \in M$ is called \textit{Demazure root} of the cone $\sigma$, if the following conditions hold: \\*[\smallskipamount]
(1) there exist $\rho_e \in \sigma(1)$ such that $\langle e, \rho_e \rangle = -1;$ \\
(2) $\langle e, \rho \rangle \ge 0$ for all $\rho \in \sigma(1)\setminus \{\rho_e\}$.\\*[\smallskipamount]
A set of all Demazure roots of $\sigma$ denotes by $\mathcal{R}(\sigma)$. And we say that the ray $\rho_e$ is associated with $e$. To simplify the notation we will sometimes write $\rho$ for $\rho_e$.
\end{definition}

Let $A$ be an algebra $A[Y,\mathfrak{D}]$ and $\mathfrak{D}=\sum_{Z} \Delta_Z \cdot Z$ be $\sigma$-polyhedral divisor on semiprojective variety $Y$. By $\{v_{i, Z}\}$ denote a set of vertices of polyhedron $\Delta_Z$ for prime divisor $Z \subseteq Y$. Let $e$ be a Demazure root of $\sigma$. Then we consider
$$
\mathfrak{D}(e) = \sum_{\substack{Z}} \min_{\substack{i}}\{v_{i, Z}(e)\}\cdot Z \quad \text{and} \quad \Phi_e^{\times} = H^0(Y,\mathcal{O}_Y(\mathfrak{D}(e)))\setminus\{0\}.
$$
For every function $\varphi \in \Phi_e^{\times}$ we set
\begin{equation}\label{d_vert_formula}
\df_{e,\ph}(f\chi^m) = \langle m, \rho_e \rangle \cdot \ph \cdot f\chi^{m+e} \quad \text{for all } m \in \sigma_M^{\lor} \text{ and } f \in \mathbb{K}(Y).
\end{equation}
This expression gives a derivation on $A$.
The next theorem provides a classification for homogeneous locally nilpotent derivations of vertical type on the variety $A[Y,\mathfrak{D}]$ (see \cite[Theorem~2.4]{Li2} and also \cite[Theorem~1.7]{AL}).

\begin{theorem}\label{ver}
For each $e \in \mathcal{R}(\sigma)$ and function $\varphi \in \Phi_e^{\times}$ the derivation $\df_{e,\ph}$ is a homogeneous locally nilpotent derivation of vertical type  $A = A[Y,\mathfrak{D}]$ with degree $e$ and kernel
$$
\ker \df_{e,\ph} = \bigoplus_{\substack{m \in \tau_e \cap M}} A_m\chi^m,
$$
where $\tau_e \subseteq \sigma^{\lor}$ is a facet of $\sigma^{\lor}$  dual to $\rho_e$. Conversely, if $\df \ne 0$ is a homogeneous locally nilpotent derivation of fiber type on $A$, then $\df$ is equal to $\df_{e,\ph}$ for some root $e \in \mathcal{R}(\sigma)$ and some function $\varphi \in \Phi_e^{\times}$.
\end{theorem}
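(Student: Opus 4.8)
The plan is to prove the two directions separately: that each $\partial_{e,\varphi}$ is a homogeneous locally nilpotent derivation of vertical type with the stated degree and kernel is a direct verification, while the converse is the substantive part and reduces to the toric (Demazure) classification.

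\emph{The maps $\partial_{e,\varphi}$.} First I would check that \eqref{d_vert_formula} sends $A$ into $A$: for $f\chi^m\in A_m\chi^m$ we have $\operatorname{div}f+\mathfrak{D}(m)\ge 0$ and $\operatorname{div}\varphi+\mathfrak{D}(e)\ge 0$, and since the support functions are superadditive, $h_Z(m+e)\ge h_Z(m)+h_Z(e)$ (clear from $h_Z(m)=\min_{p\in\Delta_Z}\langle m,p\rangle$), we get $\mathfrak{D}(m+e)\ge\mathfrak{D}(m)+\mathfrak{D}(e)$, hence $\varphi f\in A_{m+e}$; moreover $m+e\in\sigma^\vee_M$ whenever $\langle m,\rho_e\rangle\ne 0$ because $e$ is a Demazure root with associated ray $\rho_e$. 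The Leibniz identity on a product $f\chi^m\cdot g\chi^{m'}$ reduces to additivity of $m\mapsto\langle m,\rho_e\rangle$, so $\partial_{e,\varphi}$ is a homogeneous derivation of degree $e$; and $\partial_{e,\varphi}^k(f\chi^m)=\prod_{j=0}^{k-1}(\langle m,\rho_e\rangle-j)\cdot\varphi^{k}f\chi^{m+ke}$ (using $\langle e,\rho_e\rangle=-1$) vanishes once $k>\langle m,\rho_e\rangle$, so, the homogeneous components of $\partial_{e,\varphi}^k(a)$ having pairwise distinct degrees, local nilpotency propagates to all of $A$. Extending $\partial_{e,\varphi}$ to $\operatorname{Quot}(A)$ and evaluating at $f\chi^m/g\chi^m$ gives $0$, so $\partial_{e,\varphi}(\KK(Y))=0$ and the type is vertical. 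Finally $\bigoplus_{m\in\tau_e\cap M}A_m\chi^m\subseteq\ker\partial_{e,\varphi}$ since $\langle m,\rho_e\rangle=0$ on $\tau_e$, and conversely if $a=\sum a_{m_i}\chi^{m_i}$ (distinct $m_i$) lies in the kernel, each component $\langle m_i,\rho_e\rangle\varphi\,a_{m_i}\chi^{m_i+e}$ vanishes, forcing every $m_i$ with $a_{m_i}\ne 0$ into $\tau_e$.

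\emph{The converse.} Let $\partial\ne 0$ be a homogeneous LND of vertical type on $A$ of degree $e$, with extension $\bar\partial$ to $\operatorname{Quot}(A)$; by hypothesis $\bar\partial$ vanishes on $F:=\KK(Y)$, hence is $F$-linear. The $F$-subalgebra $F\cdot A\subseteq\operatorname{Quot}(A)$ is the semigroup algebra $F[W]$ with $W=\{m\in\sigma^\vee_M:A_m\ne 0\}$ (each nonzero $A_m$ spans $F$), it is $\bar\partial$-stable, and $\bar\partial$ is still locally nilpotent on it; by semiampleness of the $\mathfrak{D}(m)$ the saturation of $W$ is $\sigma^\vee_M$, so the normalization of $F[W]$ is $F[\sigma^\vee_M]$ and $\bar\partial$ extends to an $F$-linear homogeneous LND $D$ of the affine toric algebra $F[\sigma^\vee_M]$. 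Writing $D(\chi^m)=\phi(m)\chi^{m+e}$, Leibniz forces $\phi$ to be additive, and local nilpotency forces $\phi(e)\ne 0$ (else $D=0$) and $-\phi(m)/\phi(e)\in\ZZ_{\ge 0}$ for all $m\in\sigma^\vee_M$, so $\psi:=-\phi/\phi(e)$ extends (as $\sigma$ is pointed, $\sigma^\vee_M$ generates $M$) to an element of $N=\Hom(M,\ZZ)$ nonnegative on $\sigma^\vee_M$, i.e. $\psi\in\sigma\cap N$, with $\langle e,\psi\rangle=-1$. The requirement that $D$ preserve $F[\sigma^\vee_M]$ (namely $m+e\in\sigma^\vee$ whenever $\langle m,\psi\rangle\ne 0$) then forces $\psi$ to be the primitive generator of a ray $\rho_e\in\sigma(1)$ with $\langle e,\rho\rangle\ge 0$ on every other ray $\rho$: otherwise $\{m\in\sigma^\vee:\langle m,\psi\rangle>0\}$ is dense in $\sigma^\vee$ and its shift by $e$ lies in $\sigma^\vee$, forcing $\sigma^\vee+e\subseteq\sigma^\vee$ and so $e\in\sigma^\vee$, contradicting $\langle e,\psi\rangle=-1$; and if some ray $\rho$ had $\langle e,\rho\rangle<0$ then $\{m\in\sigma^\vee:\langle m,\psi\rangle>0\}\subseteq\{m\in\sigma^\vee:\langle m,\rho\rangle>0\}$, forcing the facets dual to $\rho$ and to $\rho_e$ to coincide, i.e. $\rho=\rho_e$. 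Hence $e\in\mathcal{R}(\sigma)$, and $D=\partial_{e,\varphi}$ on $F[\sigma^\vee_M]$ with $\varphi:=-\phi(e)\in F^\times$; restricting to $A$ gives $\partial(f\chi^m)=\langle m,\rho_e\rangle\varphi f\chi^{m+e}$.

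\emph{Identifying $\varphi$.} It remains to see $\varphi\in\Phi_e^\times$, i.e. $\operatorname{div}\varphi+\mathfrak{D}(e)\ge 0$. From $\partial(A)\subseteq A$ we get $\varphi f\in A_{m+e}$ whenever $\langle m,\rho_e\rangle\ne 0$ and $f\in A_m$. If $\operatorname{div}\varphi+\mathfrak{D}(e)$ had a negative coefficient at a prime divisor $Z_0$, I would pick the vertex $v$ of $\Delta_{Z_0}$ with $v(e)=h_{Z_0}(e)$, choose $m$ — after replacing it by a large multiple, using semiampleness of $\mathfrak{D}(m)$ — deep in the full-dimensional normal cone of $v$ so that $m+e$ is still there and $\langle m,\rho_e\rangle\ne 0$, and choose $f\in A_m$ with $\operatorname{ord}_{Z_0}f=-h_{Z_0}(m)$; then $h_{Z_0}(m)=v(m)$ and $h_{Z_0}(m+e)=v(m+e)$, whence $\operatorname{ord}_{Z_0}(\varphi f)+h_{Z_0}(m+e)=\operatorname{ord}_{Z_0}\varphi+h_{Z_0}(e)<0$, contradicting $\varphi f\in A_{m+e}$. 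Therefore $\varphi\in\Phi_e^\times$ and $\partial=\partial_{e,\varphi}$.

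I expect this last step to be the main obstacle: upgrading the ring inclusion $\partial(A)\subseteq A$ to the sharp positivity $\operatorname{div}\varphi+\mathfrak{D}(e)\ge 0$ is exactly where properness of the polyhedral divisor is used — via the existence of a section of $\mathcal{O}_Y(\mathfrak{D}(m))$ of extremal order along $Z_0$ and the choice of $m$ in the linearity region of each support function. The toric reduction and the Demazure-root extraction are comparatively robust, and the forward direction is routine. (This is \cite[Theorem~2.4]{Li2}; see also \cite[Theorem~1.7]{AL}.)
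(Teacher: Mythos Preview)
The paper does not prove Theorem~\ref{ver}: it is quoted as a known result from \cite[Theorem~2.4]{Li2} (see also \cite[Theorem~1.7]{AL}), announced in the sentence preceding the statement, and used as preliminary input for the paper's own work in Sections~\ref{s5}--\ref{s7}. So there is no in-paper proof to compare against.

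Your sketch is a faithful reconstruction of Liendo's argument. The forward direction is the routine verification you give (superadditivity of $h_Z$, the falling-factorial formula for $\partial_{e,\varphi}^k$, verticality from $F$-linearity). For the converse you follow the standard route: pass to $F\cdot A$ and its normalization $F[\sigma^\vee_M]$, read off an additive $\phi\colon M\to F$ from Leibniz, use local nilpotency to force $\psi:=-\phi/\phi(e)\in\sigma\cap N$ with $\langle e,\psi\rangle=-1$, argue via the face--dual-face correspondence that $\psi$ is the primitive generator of a ray and that $e\in\mathcal{R}(\sigma)$, and finally squeeze $\varphi\in\Phi_e^\times$ out of $\partial(A)\subseteq A$ by choosing $m$ in the linearity cone of a vertex realizing $h_{Z_0}(e)$ and an $f\in A_m$ of extremal order along $Z_0$ (semiampleness). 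Two minor points worth tightening: the primitivity of $\psi$ follows immediately from $\langle e,\psi\rangle=-1$ with $e\in M$, and the extension of the LND from $F[W]$ to its normalization $F[\sigma^\vee_M]$ is Seidenberg's theorem, which you invoke implicitly. Neither is a gap.
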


The classification of locally nilpotent derivations of horizontal type is more complex and is known only in complexity one case. In this case $Y$ from the triple $(Y, \sigma, \mathfrak{D})$ is a smooth curve. As described in \cite{Li} homogeneous locally nilpotent derivations exist if $Y$ is isomorphic either to $\mathbb{A}^1$ or to $\mathbb{P}^1$. Within the frame of current research we consider only the case of affine line.

\begin{definition}\label{def-hor-1}
A \textit{colored} $\sigma$-polyhedral divisor $Y$ is a collection $\widehat{\mathfrak{D}} = \{ \mathfrak{D}; v_z \: \forall z \in Y \}$, where:
\begin{enumerate}
\item $\mathfrak{D} = \sum_{z \in Y} \Delta_z \cdot z$ is a proper $\sigma$-polyhedral divisor on $Y$, and $v_z$ is a vertex of $\Delta_z;$
\item $v_{\deg} := \sum_{z}v_z$ is a vertex of $\deg \mathfrak{D} := \sum_{z}\Delta_z;$ 
\item $v_z \in N$ with at most one exception in \textit{marked point}~$z_0$. If all the vertices $v_z$ belong to the lattice $N$ ny point of the divisor $\mathfrak{D}$ can be chosen for the marked point $z_0$.
\end{enumerate}
\end{definition}

Let $\widehat{\mathfrak{D}}$ be a colored $\sigma$-polyhedral divisor on $Y$ and $\omega \subseteq N_{\mathbb{Q}}$ be a cone generated by $\deg \mathfrak{D} - v_{\deg}$. By $\widehat{\omega} \subseteq (N\oplus\mathbb{Z})_{\mathbb{Q}}$ we denote \textit{an associated cone} of the divisor $\mathfrak{D}$ generated by $(\omega,0)$ and $(v_{z_0},1)$. Let also $d$ be a positive integer such that $d \cdot v_{z_0} \in N$. 

\begin{definition}\label{def-hor-2}
A pair $(\widehat{\mathfrak{D}}, e)$ where $\widehat{\mathfrak{D}}$ is a colored $\sigma$-polyhedral divisor on $Y$ and $e \in M$ is called \textit{coherent} if the following conditions hold:
\begin{enumerate}
    \item there exists $s \in \mathbb{Z}$ such that $\hat{e} = (e,s) \in M\oplus\mathbb{Z}$ is a Demazure root of assosiated cone $\widehat{\omega}$ with associated ray $\hat{\rho} = (d\cdot v_{z_0},d)$. In this case $s = -1/d-v_{z_0}(e);$
    \item $v(e)\ge 1+v_z(e)$ for each $z \ne z_0$ and for each $v \ne v_z$ of polyhedron $\Delta_z;$ 
    \item $d\cdot v(e)\ge 1+d\cdot v_{z_0}(e)$ for each $v \ne v_{z_0}$ of polyhedron $\Delta_{z_0}$.
\end{enumerate}
\end{definition}

Consider $L = \{m\in M \mid v_{z_0}(m) \in \ZZ \}$. Since minimum function is linear with respect to the Minkowski sum, $\mathfrak{D}(m)$ is linear for all $m \in \omega^{\lor}$. Since every divisor is principal on a line $Y = \AA^1$ , there exist linear functions $\ph^m \in \mathbb{K}(Y)$ such that $\text{div}(\varphi^m) + \mathfrak{D}(m) = 0$ and $\varphi^m \cdot \varphi^{m'} = \varphi^{m+m'}$ for all $m, m' \in \omega^{\lor}_L$.

The following theorem (see \cite[Theorem 3.28]{Li}, and also \cite[Theorem 1.10]{AL}) gives a classification for homogeneous locally nilpotent derivations of horizontal type on $A[Y,\mf{D}]$.
\begin{theorem}\label{hor}
Let $X = X[Y,\mf{D}]$ be a normal affine $\mathbb{T}$-variety and $Y = \AA^1$. Then homogeneous locally nilpotent derivations of horizontal type on the algebra $A[Y,\mf{D}]$ are in one-to-one correspondence with coherent pairs $(\widehat{\mathfrak{D}}, e)$, where $\widehat{\mathfrak{D}}$ is a colored $\sigma$-polyhedral divisor and $e \in M$. Moreover, homogeneous locally nilpotent derivation $\df$ corresponding to $(\widehat{\mathfrak{D}}, e)$ has degree $e$ and kernel
$$
\ker \df = \bigoplus_{\substack{m \in \omega^{\lor}_L}} \mathbb{K}\varphi^m.
$$
\end{theorem}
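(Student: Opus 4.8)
The statement packages a construction (every coherent pair yields a homogeneous locally nilpotent derivation of horizontal type with the prescribed degree and kernel) and a classification (every such derivation comes from a unique coherent pair). The plan is to prove the two directions separately; recovering the combinatorial data from an arbitrary derivation is the part that carries the content.

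\emph{From a coherent pair to a derivation.} Given a coherent pair $(\widehat{\mf{D}},e)$, I would first form the associated cone $\widehat\omega\subseteq(N\oplus\ZZ)_{\QQ}$, on which, by coherence condition (1), $\hat e=(e,s)$ is a Demazure root with associated ray $\hat\rho=(d\,v_{z_0},d)$. The toric case of Theorem~\ref{ver} (take the base $Y$ to be a point) then produces a homogeneous locally nilpotent derivation $\widehat\partial$ of degree $\hat e$ on the semigroup algebra $\mathbb{K}[\widehat\omega^{\lor}\cap(M\oplus\ZZ)]$, acting on characters by $\widehat\partial(\chi^{\hat m})=\langle\hat m,\hat\rho\rangle\,\chi^{\hat m+\hat e}$. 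That toric algebra is naturally tied to $A[\AA^1,\mf{D}]$: after moving the marked point to the origin and identifying the extra torus coordinate $\chi^{(0,1)}$ with the affine coordinate $t$ on $Y=\AA^1$, it is carried isomorphically onto the subalgebra $\bigoplus_{m\in\omega^{\lor}_M}A_m\chi^m$, and $A[\AA^1,\mf{D}]$ is obtained from it by adjoining finitely many further homogeneous generators of degrees in $\sigma^{\lor}_M\setminus\omega^{\lor}_M$. I would then write down $\partial$ by a formula on a set of homogeneous generators extending $\widehat\partial$, and check: well-definedness and $\partial(A[\AA^1,\mf{D}])\subseteq A[\AA^1,\mf{D}]$, which is exactly what conditions (2) and (3) encode, (2) bounding the pole orders over the points $z\ne z_0$ where $v_z$ is integral, (3) those over $z_0$, the integer $d$ absorbing the non-integrality of $v_{z_0}$; local nilpotency, since each chosen generator is killed by a power of $\partial$, inherited from the Demazure structure of $\widehat\partial$; degree $e$, clear since $\hat e$ projects to $e$; and horizontal type, because $\langle(0,1),\hat\rho\rangle=d\ne0$, so $\partial(t)\ne0$. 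The kernel description is then a byproduct of this explicit form.

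\emph{From a derivation to a coherent pair.} Let $\partial\ne0$ be a homogeneous locally nilpotent derivation of horizontal type on $A=A[\AA^1,\mf{D}]$, of degree $e$, with corresponding one-parameter unipotent subgroup $\Ga\subseteq\Aut(X)$, and let $H\subseteq\Aut(X)$ be the subgroup generated by $\TT$ and $\Ga$. Since $\partial$ is horizontal, the generic $H$-orbit strictly contains the generic $\TT$-orbit, hence is open in $X$, so $\text{Quot}(\ker\partial)^{\TT}=\text{Quot}(A)^{H}=\KK$. It follows that $\TT$ acts with a dense orbit on the normal affine variety $\Spec\ker\partial$: every homogeneous component of the $M$-graded algebra $\ker\partial$ is at most one-dimensional, and $\ker\partial=\bigoplus_{m\in S}\KK\varphi^m$ for a saturated, full-rank subsemigroup $S$ of a finite-index sublattice $L\subseteq M$, with $\varphi^m\varphi^{m'}=\varphi^{m+m'}$. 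Writing each $\varphi^m\in A_m\subseteq\KK(t)$ as $c_m\prod_z(t-z)^{a_z(m)}$ and using multiplicativity, the maps $m\mapsto a_z(m)$ extend to group homomorphisms $L\to\ZZ$, i.e.\ to elements of $N_{\QQ}$; I would set $v_z:=-a_z$ and, using $\text{div}\,\varphi^m+\mf{D}(m)\ge0$ together with full-dimensionality of the cone spanned by $S$, check that $h_{\Delta_z}$ is linear on that cone, cut out by a vertex $v_z$ of $\Delta_z$, that $v_{\deg}=\sum_z v_z$ is a vertex of $\deg\mf{D}$, and (comparing pole orders) that $S=\omega^{\lor}_L$ where $\omega$ is the cone generated by $\deg\mf{D}-v_{\deg}$. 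One then shows that non-integrality of $a_z$ can occur for at most one point, which becomes the marked point $z_0$, and puts $L=\{m\in M\mid v_{z_0}(m)\in\ZZ\}$; this is the colored divisor $\widehat{\mf{D}}$. Finally, passing to the associated cone $\widehat\omega$ and using the degree of a homogeneous local slice of $\partial$ to identify the extra torus direction, one relates $\partial$ to a Demazure-type derivation, so that the toric case of Theorem~\ref{ver} forces $\hat e=(e,s)$ to be a Demazure root of $\widehat\omega$ with associated ray $\hat\rho$ (coherence (1)), while $\partial(A)\subseteq A$ and local nilpotency give (2) and (3). The original $\partial$ then coincides with the derivation built from $(\widehat{\mf{D}},e)$ in the first part, so its kernel is $\bigoplus_{m\in\omega^{\lor}_L}\KK\varphi^m$ with $\text{div}\,\varphi^m+\mf{D}(m)=0$, and the correspondence is bijective, the scalar ambiguity in the $\varphi^m$ being pinned down by the normalization.

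I expect the reconstruction step to be the main obstacle: proving that $\ker\partial$ is literally a semigroup algebra, isolating the single marked point $z_0$ and the sublattice $L$, verifying that the extracted $v_z$ are genuine vertices with $\sum_z v_z$ a vertex of $\deg\mf{D}$, and above all showing that the three coherence conditions are exactly equivalent to ``$\partial$ is a locally nilpotent derivation of $A$ of degree $e$'' once translated through the Altmann--Hausen dictionary (Theorem~\ref{AH-Th}, Corollary~\ref{AH-crl}). The interplay between the local behaviour of the $\Ga$-action along the fibre over $z_0$ and the combinatorics of the associated cone $\widehat\omega$ — in particular the appearance of the denominator $d$ — is where the difficulty is concentrated; the rest is bookkeeping with support functions and Minkowski sums.
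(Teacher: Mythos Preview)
The paper does not prove this theorem: it is quoted as a preliminary result with the citation ``see \cite[Theorem 3.28]{Li}, and also \cite[Theorem 1.10]{AL}'' and no argument is given in the paper itself. So there is nothing here to compare your proposal against; the theorem belongs to Liendo's earlier work, and in this paper it functions purely as input for the commutation criteria in Sections~5--7.

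That said, your two-step outline --- build $\partial$ from a coherent pair via the associated cone $\widehat\omega$ and the toric Demazure formula, then recover $(\widehat{\mf{D}},e)$ from an arbitrary horizontal $\partial$ by analysing the $M$-graded kernel --- is the shape of the argument in \cite{Li}. Your sketch is plausible but remains a sketch: several of the steps you flag as ``I would check'' (that the extracted $v_z$ are vertices of $\Delta_z$, that at most one $v_z$ is non-integral, that conditions (2)--(3) are equivalent to $\partial(A)\subseteq A$) are exactly the substantive lemmas in Liendo's proof and are not automatic. If you want to turn this into a self-contained proof you should consult \cite[\S3]{Li} directly rather than the present paper.
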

Let us recall a formula for homogeneous locally nilpotent derivation corresponding to $(\widehat{\mathfrak{D}}, e)$. Without loss of generality we suppose $z_0 = 0$. Since every divisor is principal on a line we can assume $v_z = 0 \in N$ for all $z \ne 0$ by corollary~\ref{AH-crl}. If we consider $\mathbb{K}[Y]$ as $\mathbb{K}[t]$ we obtain the following formula for $\df$ (see \cite[Section~1]{AL}):
\begin{equation}\label{f-hor}
\df_e(t^r\chi^m) = d(v_{0}(m)+r)\cdot t^{r+s} \cdot \chi^{m+e} \quad \text{for all } (m,r) \in M\oplus\mathbb{Z}.  
\end{equation}
Notice that if $\hat{m} = (m,r) \in M\oplus\ZZ$ and $\chi^{\hat{m}} = \chi^m\cdot t^r$ then $\df$  is similar to the derivation of vertical type:
$$
\df_e(\chi^{\hat{m}}) = \langle\hat{m}, \hat{\rho} \rangle \cdot \chi^{\hat{m}+\hat{e}} \quad \text{for all } \hat{m} \in M\oplus\mathbb{Z}.
$$

\begin{example}\label{ex01d}
Consider the derivations for the variety of example \ref{ex01} defined by the divisor $\mf{D} = [0,1]\cdot\{0\}$ on affine line.
Firstly, since $\sigma = \{0\}$ it follows that there is no Demazure roots. Consequently there is no derivations of vertical type.
We have two cases for the derivations of horizontal type:

\textit{Case 1.} $v_0 = \{0\}$. \\ Here $\omega = \QQ_{\ge 0}$, $\widehat{\omega} = \QQ^2_{\ge 0}$, $d=1$, $\hat e = (e,s)$, $s=-1$, $e \ge 0$. From Condition 3 of Corollary~\ref{def-hor-2} it follows that $e\ge 1$. Consequently,  $\df(t^r\chi^m)  = r \cdot t^{r-1} \chi^{m+1}$. We have $x = \chi$, $y = t\chi^{-1}$ and $\df_e(x) = 0$,  $\df_e(y) = x^{e-1}$.

\textit{Case 2.} $v_0 = \{1\}$. \\ Here $\omega = \QQ_{\le 0}$, $\widehat{\omega} = \text{Cone}\bigl((-1,0),(1,1)\bigr)$, $d=1$, $\hat e = (e,s)$, $s=-1-e$, $e \ge 0$. Condition 3:  $0 \ge 1+e$,  that is $s\ge0$ and $e\le-1$. Consequently,   $\df(t^r\chi^m)  = (m+r) \cdot t^{r+s} \chi^{m+e}$. Then we obtain, $\df_e(x) = \df_e(\chi)=t^s\chi^{e+1} = t^s\chi^{-s} = y^s$,  $\df_e(y) = \df_e(t\chi^{-1}) = 0$.

Thus, all homogeneous locally nilpotent derivations on the algebra with $\ZZ$-grading $\KK[x,y]$, $\deg x = 1$, $\deg y = -1$ are given by 
\begin{equation}\label{ex1}
x^a\frac{\df}{\df y}\quad \text{and} \quad y^b\frac{\df}{\df x},\quad \text{where } \: a,b \in \ZZ_{\ge 0}.
\end{equation}
\end{example}

\begin{example}\label{ex02d}
Consider the derivations for the variety of example \ref{ex02}. There exist 6 possibilities to choose a pair of vertices for colored divisor $\widehat{\mathfrak{D}}$. From Condition 2 of Corollary~\ref{def-hor-2} it follows that only 4 cases are possible:
\begin{align*}
    (1)\:\: v_0 &= (0,0), \tilde v_0 = (0,0) \quad     (3)\:\: v_2 = (-1/4,-1), \tilde v_0 = (0,0)\\
    (2)\:\: v_1 &= (0,1), \tilde v_1 = (0,1) \quad     (4)\:\: v_2 = (-1/4,-1), \tilde v_1 = (0,1).
\end{align*}
The point $z_0 = 0$ is a marked point in all cases. We have
\begin{align*}
    &(1) \:\: d=1,\: \widehat \omega = \cone\{(0,1,0),(-1/4,-1,0),(0,0,1)\}        \\
    &(2) \:\: d=1,\: \widehat \omega = \cone\{(0,-1,0),(-1/8,-1,0),(0,1,1)\}       \\
    &(3) \:\: d=4,\: \widehat \omega = \cone\{(0,1,0),(1/4,1,0),(-1/4,-1,1)\}       \\
    &(4) \:\: d=4,\: \widehat \omega = \cone\{(0,1,0),(1/8,1,0),(-1/4,-1,1)\}.
\end{align*}
By inequalities for $\hat e = (e,s) = (a,b,s)$, $a,b,s \in \ZZ$ in Demazure root definition and conditions 2 and 3 of Corollary~\ref{def-hor-2} we obtain
\begin{align*}
    &(1) \:\: \df_1(t^r\chi^m) = r\cdot t^{r-1}\chi^{m+e}, \:s=-1,\: b \ge 1,\: a+4b \le -4       \\
    &(2) \:\: \df_2(t^r\chi^m) = (r+m_2)\cdot t^{r+s}\chi^{m+e}, \:b+s=-1,\: b \le -1,\: a+8b\le -4      \\
    &(3) \:\: \df_3(t^r\chi^m) = (4r-m_1-4m_2)\cdot t^{r+s}\chi^{m+e}, \\
        &\qquad a+4b-4s=1,\: b \ge 1,\:a+8b\ge1  \\
    &(4) \:\: \df_4(t^r\chi^m) = (4r-m_1-4m_2)\cdot t^{r+s}\chi^{m+e}, \\
        &\qquad a+4b-4s=1,\: b \le -1,\: a+4b \ge 1.
\end{align*}
Derivations (1) and (3) can be easily expressed in variables $x_1, x_2, x_3, x_4$:
\begin{align*}
    (1) \:\: &\df_1 = x_2^{-a-4b-4}x_3^b \frac{\df}{\df x_1}-x_2^{-a-4b-4}x_3^{b-1}(2 x_1 x_2^4 + 1)\frac{\df}{\df x_4} ,\\
    &b \ge 1,\: a+4b \le -4    \\
    (3) \:\: &\df_3 = x_1^{\frac{1}{4}(a+4b-1)}x_3^b\frac{\df}{\df x_2}-4x_1^{\frac{1}{4}(a+4b+7)}x_2^3x_3^{b-1}\frac{\df}{\df x_4} ,\\
    &b \ge 1,\:a+4b \ge 1, \: a = 4k+1, \, k \in \ZZ.    
\end{align*}
For derivations (2) and (4) the similar formulae are quite complicated. Therefore, as A.~Liendo (see \cite[Example 3.35]{Li}) we need to choose another system of generators for the algebra. Let us consider the divisor $\mf{D} = \Delta_0\cdot\{0\} + \Delta_1'\cdot\{1\}$, where $\Delta_1 = \{0\} \times [-1,0]$. By Corollary~\ref{crl_main} this divisor defines an isomorphic algebra of functions and $\TT$-variety. In this case we have
$$
x_1 = -t\chi^{(4,0)}, \quad
x_2 = \chi^{(-1,0)}, \quad
x_3 = (1-t)\chi^{(-4,1)}, \quad
x_4 = t\chi^{(8,-1)}. \quad
$$
Notice that under this substitution the associated cones and the basic formulae for derivations $\df_2$ and $\df_4$ have not changed.
In variables $x_1, x_2, x_3, x_4$ we obtain the following:
\begin{align*}
    (2) \:\: &\df_2 = -x_2^{-a-8b-4}x_4^{-b} \frac{\df}{\df x_1}+(x_2^{-a-8b-4}x_4^{-b-1}+2x_1x_2^{-a-8b}x_4^{-b-1})\frac{\df}{\df x_3},\\ 
    &b \le -1,\: a+8b\le -4 \\
    (4) \:\: &\df_4 = x_1^{\frac{1}{4}(a+8b-1)}x_4^{-b}\frac{\df}{\df x_2} - 4x_1^{\frac{1}{4}(a+8b+7)} x_2^3 x_4^{-b-1}\frac{\df}{\df x_3}, 
    \\ &b \le -1,\: a+8b \ge 1, \: a = 4k+1, \, k \in \ZZ.
\end{align*}
\end{example}

\section{Commuting derivations of vertical type}
\label{s5}

In this section we recall a criteria  figuring out whether two homogeneous locally nilpotent derivations of vertical type commute or not.

\begin{lemma}\label{lem1}
The commutator of homogeneous locally nilpotent derivations of vertical type  $\df_{e,\ph}$ and $\df_{\tilde e,\tilde \ph}$ is given by
\begin{eqnarray}
\nonumber [\df_{e,\ph},\df_{\tilde e,\tilde \ph}] = \bigl( \langle m, \tilde \rho \rangle \langle \tilde e, \rho \rangle - \langle m, \rho \rangle \langle e, \tilde \rho \rangle \bigr) \cdot \ph \tilde \ph \cdot f\chi^{m+e+\tilde e},
\end{eqnarray}
where $\rho$ and $\tilde \rho$ are associated with roots $e$ and~$\tilde e$.
\end{lemma}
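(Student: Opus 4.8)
The plan is to compute the commutator $[\df_{e,\ph},\df_{\tilde e,\tilde\ph}]$ directly on a homogeneous element $f\chi^m$ using the explicit formula \eqref{d_vert_formula}, since both derivations are $M$-homogeneous and it suffices to check the identity on such elements. First I would apply $\df_{\tilde e,\tilde\ph}$ and then $\df_{e,\ph}$, keeping careful track of how the degree shifts in each step. Starting from $\df_{\tilde e,\tilde\ph}(f\chi^m)=\langle m,\tilde\rho\rangle\cdot\tilde\ph f\chi^{m+\tilde e}$, the outer derivation acts on an element of degree $m+\tilde e$, so that
$$
\df_{e,\ph}\bigl(\df_{\tilde e,\tilde\ph}(f\chi^m)\bigr)=\langle m,\tilde\rho\rangle\langle m+\tilde e,\rho\rangle\cdot\ph\tilde\ph\, f\chi^{m+e+\tilde e}.
$$
Symmetrically, $\df_{\tilde e,\tilde\ph}\bigl(\df_{e,\ph}(f\chi^m)\bigr)=\langle m,\rho\rangle\langle m+e,\tilde\rho\rangle\cdot\ph\tilde\ph\, f\chi^{m+e+\tilde e}$. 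Here I use that $\ph,\tilde\ph\in\KK(Y)$ are killed by both derivations (the derivations act only by shifting $m$ and multiplying by the fixed functions $\ph,\tilde\ph$, which commute as elements of $\KK(Y)$), so no product-rule terms involving $\df(\ph)$ or $\df(\tilde\ph)$ appear; this is really the crux of why the answer is so clean, and I would state it explicitly.

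Subtracting, the scalar coefficient of $\ph\tilde\ph f\chi^{m+e+\tilde e}$ is
$$
\langle m,\tilde\rho\rangle\langle m+\tilde e,\rho\rangle-\langle m,\rho\rangle\langle m+e,\tilde\rho\rangle
=\langle m,\tilde\rho\rangle\langle m,\rho\rangle+\langle m,\tilde\rho\rangle\langle\tilde e,\rho\rangle-\langle m,\rho\rangle\langle m,\tilde\rho\rangle-\langle m,\rho\rangle\langle e,\tilde\rho\rangle,
$$
using bilinearity of the pairing to expand $\langle m+\tilde e,\rho\rangle$ and $\langle m+e,\tilde\rho\rangle$. The two terms $\langle m,\tilde\rho\rangle\langle m,\rho\rangle$ cancel, leaving exactly $\langle m,\tilde\rho\rangle\langle\tilde e,\rho\rangle-\langle m,\rho\rangle\langle e,\tilde\rho\rangle$, which is the claimed coefficient. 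Since this holds for every homogeneous $f\chi^m$ and both sides of the asserted identity are additive, the formula follows on all of $A$.

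There is no real obstacle here; the only points requiring care are bookkeeping ones. One must remember that when $\df_{e,\ph}$ is applied \emph{after} $\df_{\tilde e,\tilde\ph}$ the relevant degree is $m+\tilde e$, not $m$ — getting this shift right is what produces the asymmetric cross terms $\langle\tilde e,\rho\rangle$ and $\langle e,\tilde\rho\rangle$ that survive. I would also note that strictly speaking the formula should be read as an identity of operators applied to $f\chi^m$, i.e. the right-hand side is $[\df_{e,\ph},\df_{\tilde e,\tilde\ph}](f\chi^m)$; and that $\ph\tilde\ph\,f\in\KK(Y)$ together with the divisor conditions guarantees the right-hand side again lies in $A$, consistent with the commutator of two derivations of $A$ being a derivation of $A$. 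Finally, it is worth observing that the commutator vanishes identically precisely when $\langle m,\tilde\rho\rangle\langle\tilde e,\rho\rangle=\langle m,\rho\rangle\langle e,\tilde\rho\rangle$ for all $m$ in the relevant semigroup, which foreshadows the combinatorial commutation criterion of the following sections, though that analysis is not needed for the present lemma.
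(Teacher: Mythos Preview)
Your proof is correct and follows essentially the same approach as the paper: a direct computation of both compositions on a homogeneous element $f\chi^m$ using formula~\eqref{d_vert_formula}, together with the observation that $\ph,\tilde\ph\in\KK(Y)$ lie in the kernel of both derivations. In fact your write-up is more careful than the paper's in explicitly tracking the degree shift $m\mapsto m+\tilde e$ before the second application and then expanding via bilinearity to see the cancellation; the paper's printed proof elides this step.
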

\begin{proof}
Using $\ph$, $\tilde \ph$ $\in \Phi^{\times}_e \subseteq \mathbb{K}(Y) \subseteq \ker \df_{e,\ph} \cap \ker \df_{\tilde e,\tilde \ph}$, we get $\df_{e,\ph}(\tilde \ph) = \df_{\tilde e,\tilde \ph}(\ph) = 0$. Taking into account (\ref{d_vert_formula}), we obtain
\begin{eqnarray}
\nonumber \df_{e,\ph} \df_{\tilde e,\tilde \ph}(f\chi^m) = \df_{e,\ph} (\langle m, \rho_{\tilde e} \rangle \cdot \tilde \ph \cdot f\chi^{m+\tilde e}) = \langle m, \rho_{\tilde e} \rangle \cdot \langle m, \rho_e \rangle \cdot  \ph \tilde \ph \cdot f\chi^{m+e+\tilde e}.
\end{eqnarray}
Similarly, we have
$$
\df_{\tilde e,\tilde \ph}\df_{e,\ph}(f\chi^m) = \langle m, \rho_e \rangle \cdot \langle m, \rho_{\tilde e} \rangle \cdot  \ph \tilde \ph \cdot f\chi^{m+e+\tilde e}.
$$
This completes the proof.
\end{proof}

\begin{propos}\label{prop1}
Let $\df_{e,\ph}$ and $\df_{\tilde e,\tilde \ph}$ be homogeneous locally nilpotent derivations of vertical type on the algebra $A = A[Y,\mf{D}]$, where $Y$ is a semiprojective variety and $\mf{D}$ is a proper polyhedral divisor. Then $\df_{e,\ph}$ and $\df_{\tilde e,\tilde \ph}$ commute if and only if one of the following conditions holds:
\\*[\smallskipamount]
    \textup{(1)} \; $ \langle \tilde e, \rho \rangle = \langle e, \tilde \rho \rangle = 0 $, where $\rho$ and $\tilde \rho$ are rays associated with roots $e$ and~$\tilde e$;\\
    \textup{(2)} \; $\rho = \tilde \rho$.
%$$
%\rho_{\tilde e} \cdot \langle \tilde e, \rho_{e} \rangle = \rho_{e} \cdot \langle e, \rho_{\tilde e} \rangle.
%$$
\end{propos}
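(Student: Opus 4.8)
The plan is to read off the result directly from the commutator formula in Lemma~\ref{lem1}. Two derivations commute precisely when the scalar coefficient
$$
\langle m, \tilde\rho\rangle\langle\tilde e,\rho\rangle - \langle m,\rho\rangle\langle e,\tilde\rho\rangle
$$
vanishes for every $m$ in $\sigma^{\lor}_M$ for which the corresponding term $\ph\tilde\ph f\chi^{m+e+\tilde e}$ is actually nonzero in $A$. So the first step is to argue that ``vanishing for all relevant $m$'' is equivalent to ``vanishing identically as a linear function of $m$ on $N_{\mathbb Q}$'': since $\sigma^{\lor}$ is full-dimensional (the torus action is effective, so $\sigma$ is pointed) and $\Phi^{\times}_e\cap\Phi^{\times}_{\tilde e}\neq\varnothing$ forces enough graded pieces $A_m$ to be nonzero on a Zariski-dense set of lattice points, a linear functional of $m$ that vanishes on all of these must be the zero functional. (If one wants to be careful about the degenerate possibility that too few $A_m$ are nonzero, note that if the commutator is nonzero it is itself a homogeneous locally nilpotent derivation, hence by Theorem~\ref{ver} nonzero on a whole facet's worth of $m$, so the coefficient cannot vanish on that set unless it vanishes identically.)

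Once the problem is reduced to the identity
$$
\langle m,\tilde\rho\rangle\langle\tilde e,\rho\rangle = \langle m,\rho\rangle\langle e,\tilde\rho\rangle \qquad\text{for all }m\in M_{\mathbb Q},
$$
it becomes pure linear algebra. View both sides as linear functionals in $m$: the left side is $\langle\tilde e,\rho\rangle\cdot\tilde\rho$ and the right side is $\langle e,\tilde\rho\rangle\cdot\rho$, as elements of $N_{\mathbb Q}$. So the commuting condition is exactly
$$
\langle\tilde e,\rho\rangle\cdot\tilde\rho \;=\; \langle e,\tilde\rho\rangle\cdot\rho
$$
in $N_{\mathbb Q}$. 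Now case on whether the common value is zero. If it is zero, then since $\rho$ and $\tilde\rho$ are primitive (hence nonzero) lattice vectors, both scalars $\langle\tilde e,\rho\rangle$ and $\langle e,\tilde\rho\rangle$ must vanish, which is condition~(1). If it is nonzero, then $\rho$ and $\tilde\rho$ are positive rational multiples of one another; being primitive vectors of $N$ on the same ray, they are equal, $\rho=\tilde\rho$, which is condition~(2). Conversely, (1) obviously kills both terms in the commutator formula, and (2) makes the coefficient $\langle m,\rho\rangle(\langle\tilde e,\rho\rangle-\langle e,\rho\rangle)$; here one additionally invokes the Demazure root normalization $\langle e,\rho_e\rangle=-1$, so with $\rho=\tilde\rho$ we get $\langle e,\rho\rangle=\langle\tilde e,\rho\rangle=-1$ and the coefficient vanishes identically. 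So (2) also suffices.

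The only genuine subtlety, and the step I would spend the most care on, is the reduction in the first paragraph: justifying that the scalar coefficient vanishing on the (possibly combinatorially sparse) set of $m\in\sigma^{\lor}_M$ with $A_m\neq 0$ already forces it to vanish as a linear functional. The clean way is precisely the Theorem~\ref{ver} argument: the commutator $[\df_{e,\ph},\df_{\tilde e,\tilde\ph}]$ is a homogeneous locally nilpotent derivation (bracket of two such, with the diagonalizable torus action, lands in the nilpotent radical), so if it is nonzero it equals some $\df_{e',\ph'}$ with $e'=e+\tilde e$, and such a derivation is nonzero on every $A_m\chi^m$ with $m$ in the relevant facet $\tau_{e'}$ intersected with the support of the grading — a full-dimensional cone's worth of lattice points. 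A linear functional of $m$ cannot vanish on all of them without being identically zero. Everything after that reduction is the elementary linear-algebra dichotomy above, and the converse directions are a one-line substitution each.
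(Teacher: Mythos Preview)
Your approach is essentially the paper's: use Lemma~\ref{lem1} to reduce commutation to the identity $\langle\tilde e,\rho\rangle\,\tilde\rho = \langle e,\tilde\rho\rangle\,\rho$ in $N_{\mathbb Q}$, then analyze it. The paper evaluates this identity at $m=e$ and $m=\tilde e$ to obtain a small $2\times 2$ system, while you case directly on whether the common vector is zero; the two arguments are interchangeable and equally short.

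There is, however, one incorrect claim in your reduction step. You assert that $[\partial_{e,\varphi},\partial_{\tilde e,\tilde\varphi}]$ is itself a homogeneous locally nilpotent derivation, so that Theorem~\ref{ver} applies to it. The bracket of two LNDs is homogeneous of degree $e+\tilde e$, but it need \emph{not} be locally nilpotent: on $\KK[x,y]$ the derivations $x\,\partial/\partial y$ and $y\,\partial/\partial x$ are both locally nilpotent, yet their commutator $x\,\partial/\partial x - y\,\partial/\partial y$ is semisimple. So the appeal to Theorem~\ref{ver} is unjustified. Fortunately none of this machinery is needed: since $\sigma$ is pointed, $\sigma^{\lor}$ is full-dimensional, and the weight semigroup $\{m\in\sigma^{\lor}_M : A_m\neq 0\}$ generates $M$ as a group; a linear functional of $m$ vanishing on it is identically zero. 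This is the simple justification your first paragraph was reaching for, and in fact the paper does not pause on it at all---it writes ``for all $m\in M$'' from the outset.
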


\begin{proof}
By Lemma~\ref{lem1} the derivations commute if an only if
\begin{equation*}\label{f-vv}
\langle m, \tilde \rho \rangle \langle \tilde e, \rho \rangle - \langle m, \rho \rangle \langle e, \tilde \rho \rangle = 0 \quad \text{for all } m \in M.
\end{equation*}
Since dual pairing is linear in each of its arguments, we have
$$
\bigl\langle m, \langle \tilde e, \rho \rangle \cdot \tilde \rho   - \langle e, \tilde \rho \rangle \cdot \rho \bigr\rangle= 0 \quad \text{for all } m \in M.
$$
It follows that $\langle \tilde e, \rho \rangle \cdot \tilde \rho  = \langle e, \tilde \rho \rangle \cdot \rho $. By evaluating pairing on $e$ and $\tilde e$, we obtain
$$
\begin{cases}
\langle \tilde e, \rho \rangle \cdot \bigl( \langle e, \tilde \rho \rangle + 1 \bigr) = 0\\
\langle e, \tilde \rho \rangle \cdot \bigl( \langle \tilde e, \rho \rangle + 1 \bigr) =0.
\end{cases}
$$
Hence, either $\langle \tilde e, \rho \rangle = 0$ or $\langle e, \tilde \rho \rangle = -1$ holds. These conditions are equivalent to those ones in the statement of the lemma.
\end{proof}

\begin{remark}
For $\rho$ and $\tilde \rho$ there exist roots $e$ and $\tilde e$ such that $ \langle e, \rho \rangle = \langle \tilde  e, \tilde \rho \rangle = -1 $ and $ \langle \tilde e, \rho \rangle = \langle e, \tilde \rho \rangle = 0 $ if and only if the primitive vectors $\rho$ and $\tilde \rho$ can be extended to a basis of the lattice $N$.
\end{remark}

\begin{remark}
The general orbits of $\mathbb{G}_a^2$-actions corresponding to pairs of commuting locally nilpotent derivations of case (2) preposition~\ref{prop1} are 1-dimensional. Indeed, the kernels of these two locally nilpotent derivations are equal. And by \cite[Proposition 3.4]{VP} regular invariants separate general orbits of a unipotent group action on an affine variety in this case. At the same time, commuting derivations in case (1) have different kernels, therefore general $\mathbb{G}_a^2$-orbits are 2-dimensional.
\end{remark}

\section{Commuting derivations of vertical and horizontal type}
\label{s6}

Here we suppose $Y$ to be an affine line $\mathbb{A}^1$. By Theorem~\ref{hor} a locally nilpotent derivation of horizontal type is defined by $(\widehat{\mathfrak{D}}, \tilde e)$, where $\widehat{\mathfrak{D}}$ is a colored proper polyhedral divisor, and for $\tilde e \in M$ there exists an integer $\tilde s$ such that $\hat e = (\tilde e,\tilde s) \in M\oplus\mathbb{Z}$ is a root of the associated cone $\widehat \omega$. In appropriate coordinates we have
$$
\df_{\tilde e}(t^r\chi^m) = d(v_0(m)+r)\cdot t^{r+\tilde s} \cdot \chi^{m+\tilde e} \quad \text{for all } (m,r) \in M\oplus\mathbb{Z}.
$$

\begin{lemma}\label{lem2}
Let $\df_{e,\ph}$ and $\df_{\tilde e}$ be homogeneous locally nilpotent derivations of vertical and horizontal type respectively on the algebra $A = A[Y,\mf{D}]$. Then the following conditions hold:
\\*[\smallskipamount]
    \textup{(1)} \; $[\df_{e,\ph},\df_{\tilde e}](t) = d \cdot \langle \tilde e, \rho_{e} \rangle \cdot \ph \cdot t^{s+1} \chi^{e + \tilde e}$ \\
    \textup{(2)} \; $[\df_{e,\ph},\df_{\tilde e}](\chi^m) = d\cdot \Bigl( v_0(m) \cdot \langle \tilde e, \rho_{e} \rangle \cdot \ph  -  \langle m, \rho_{e} \rangle \cdot \bigl(\ph' \cdot t + \ph \cdot  v_0(e) \bigr) \Bigr) \cdot t^s\chi^{m+e+\tilde e}$.
\end{lemma}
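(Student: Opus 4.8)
The plan is to compute the two compositions $\df_{e,\ph}\circ\df_{\tilde e}$ and $\df_{\tilde e}\circ\df_{e,\ph}$ directly on the elements $t=t^{1}\chi^{0}$ and $\chi^{m}=t^{0}\chi^{m}$, using the defining formula~(\ref{d_vert_formula}) for the vertical derivation $\df_{e,\ph}$ and the formula~(\ref{f-hor}) for the horizontal derivation $\df_{\tilde e}$, and then to subtract. Three elementary facts are used repeatedly: (a) $\df_{e,\ph}$ is of vertical type, hence annihilates $\KK(Y)$, so in particular $\df_{e,\ph}(t)=0$; (b) the vertex $v_{0}=v_{z_{0}}$ is a $\QQ$-linear functional on $M_{\QQ}$, so $v_{0}(0)=0$ and $v_{0}(m+e)=v_{0}(m)+v_{0}(e)$; (c) the pairing is bilinear, so $\langle m+\tilde e,\rho_{e}\rangle=\langle m,\rho_{e}\rangle+\langle\tilde e,\rho_{e}\rangle$.

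For~(1): formula~(\ref{f-hor}) with $v_{0}(0)=0$ gives $\df_{\tilde e}(t)=d\,t^{1+s}\chi^{\tilde e}$, and then~(\ref{d_vert_formula}), with $d\,t^{1+s}\in\KK(Y)$ playing the role of $f$, yields $\df_{e,\ph}\df_{\tilde e}(t)=d\,\langle\tilde e,\rho_{e}\rangle\,\ph\,t^{1+s}\chi^{e+\tilde e}$. Since $\df_{e,\ph}(t)=0$ by~(a), the reversed composition vanishes, and~(1) follows.

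For~(2): from~(\ref{f-hor}) we read off $\df_{\tilde e}(\chi^{m})=d\,v_{0}(m)\,t^{s}\chi^{m+\tilde e}$ (case $r=0$) and $\df_{\tilde e}(t)=d\,t^{1+s}\chi^{\tilde e}$ (case $r=1$, $m=0$); these two evaluations are the only nontrivial inputs. Then~(\ref{d_vert_formula}) gives $\df_{e,\ph}\df_{\tilde e}(\chi^{m})=d\,v_{0}(m)\,\langle m+\tilde e,\rho_{e}\rangle\,\ph\,t^{s}\chi^{m+e+\tilde e}$. For the reversed composition, $\df_{e,\ph}(\chi^{m})=\langle m,\rho_{e}\rangle\,\ph\,\chi^{m+e}$ by~(\ref{d_vert_formula}); applying the Leibniz rule to $\df_{\tilde e}$ and using $\df_{\tilde e}(\ph)=\ph'\,\df_{\tilde e}(t)=d\,\ph'\,t^{1+s}\chi^{\tilde e}$ (the quotient rule expresses $\df_{\tilde e}$ on the rational function $\ph=\ph(t)$ through $\df_{\tilde e}(t)$), together with $\df_{\tilde e}(\chi^{m+e})=d\,v_{0}(m+e)\,t^{s}\chi^{m+e+\tilde e}$ and $v_{0}(m+e)=v_{0}(m)+v_{0}(e)$, one gets
\[
\df_{\tilde e}\bigl(\ph\,\chi^{m+e}\bigr)=d\,t^{s}\chi^{m+e+\tilde e}\bigl(t\,\ph'+(v_{0}(m)+v_{0}(e))\,\ph\bigr),
\]
so $\df_{\tilde e}\df_{e,\ph}(\chi^{m})=d\,\langle m,\rho_{e}\rangle\,t^{s}\chi^{m+e+\tilde e}\bigl(t\,\ph'+(v_{0}(m)+v_{0}(e))\,\ph\bigr)$. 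Subtracting the two compositions and expanding $\langle m+\tilde e,\rho_{e}\rangle$ via~(c), the terms proportional to $\langle m,\rho_{e}\rangle\,v_{0}(m)\,\ph$ cancel, and what remains is exactly $d\bigl(v_{0}(m)\langle\tilde e,\rho_{e}\rangle\,\ph-\langle m,\rho_{e}\rangle(\ph'\,t+\ph\,v_{0}(e))\bigr)t^{s}\chi^{m+e+\tilde e}$, which is~(2).

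The only step that needs genuine care is the evaluation of $\df_{\tilde e}$ on $\ph\,\chi^{m+e}$: since $\ph$ is a rational function of $t$ rather than a monomial $t^{r}$, it cannot be fed directly into~(\ref{f-hor}); one must invoke $\KK$-linearity and the Leibniz/quotient rules to reduce to $\df_{\tilde e}(t)$ and $\df_{\tilde e}(\chi^{m+e})$, which are precisely the two evaluations that~(\ref{f-hor}) does supply. Everything else is an immediate application of the two defining formulas, linearity of the derivations, and bilinearity of the pairing, so no real obstacle arises.
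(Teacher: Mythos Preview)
Your proof is correct and follows essentially the same approach as the paper: both compute the two compositions directly on $t$ and $\chi^{m}$ using the defining formulas~(\ref{d_vert_formula}) and~(\ref{f-hor}), invoke the chain rule to evaluate $\df_{\tilde e}(\ph)=\ph'\,\df_{\tilde e}(t)$, and then subtract, with the $v_{0}(m)\langle m,\rho_{e}\rangle\,\ph$ terms cancelling. Your write-up is in fact a bit more explicit about the auxiliary facts $(v_{0}(0)=0$, bilinearity, why $\ph$ needs the Leibniz/chain rule rather than a direct application of~(\ref{f-hor})$)$, but the substance is identical.
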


\begin{proof}
The first condition follows from the equality
$$
[\df_{e,\ph},\df_{\tilde e}](t) = \df_{e,\ph}(\df_{\tilde e}(t))-\df_{\tilde e}(\df_{e,\ph}(t)) =  \df_{e,\ph}(d\cdot t^{s+1}\chi^{\tilde e}) - 0 = d \cdot \langle \tilde e, \rho_{e} \rangle \cdot \ph \cdot t^{s+1} \chi^{e + \tilde e}.
$$
To prove the second one let us compute $\df_{e,\ph}(\df_{\tilde e}(\chi^m))$ and $\df_{\tilde e}(\df_{e,\ph}(\chi^m))$. Next we have
\begin{multline*}
    \df_{e,\ph}(\df_{\tilde e}(\chi^m)) = \df_{e,\ph}(d\cdot v_0(m) \cdot t^s \chi^{m+\tilde e}) = d\cdot v_0(m) \cdot \langle m+\tilde e, \rho_{e} \rangle \cdot \ph \cdot t^s\chi^{m+e+\tilde e} = \\
\shoveright{
    = d\cdot v_0(m) \cdot \bigl( \langle m, \rho_{e} \rangle + \langle \tilde e, \rho_{e} \rangle \bigr) \cdot \ph \cdot t^s\chi^{m+e+\tilde e} }\\
\shoveleft
    \df_{\tilde e}(\df_{e,\ph}(\chi^m)) = \df_{\tilde e}(\langle m, \rho_{e} \rangle \cdot \ph \cdot \chi^{m+e}) =  \\
    = \langle m, \rho_{e} \rangle \cdot \bigl(\df_{\tilde e}(\ph) \cdot \chi^{m+e} + \ph \cdot d \cdot v_0(m+e) \cdot t^s\chi^{m+e+\tilde e} \bigr).
 \end{multline*}
By the chain rule for the derivative of composition of two functions it follows that 
$$
\df_{\tilde e}(\ph) = \ph' \cdot \df_{\tilde e}(t) =  \ph' \cdot d \cdot t^{s+1}\chi^{\tilde e},
$$ 
where prime denotes the partial derivative with respect to $t$. Therefore, we obtain
$$
\df_{\tilde e}(\df_{e,\ph}(\chi^m)) = d  \cdot \langle m, \rho_{e} \rangle \cdot  t^s\chi^{m+e+\tilde e} \cdot \bigl(\ph' \cdot t + \ph \cdot  v_0(m+e) \bigr).
$$
Subtracting $\df_{\tilde e}(\df_{e,\ph}(\chi^m))$ from $\df_{e,\ph}(\df_{\tilde e}(\chi^m))$ we obtain the second condition.
%$d\cdot v_0(m) \cdot \langle m, \rho_{e} \rangle \cdot \ph \cdot t^s\chi^{m+e+\tilde e}$ сократится. 

\end{proof}

\begin{theorem}\label{t4}
Let $\df_{e,\ph}$ and $\df_{\tilde e,\tilde \ph}$ be homogeneous locally nilpotent derivations of vertical and horizontal type respectively on the algebra $A = A[Y,\mf{D}]$. Then $\df_{e,\ph}$ and $\df_{\tilde e,\tilde \ph}$ commute if and only if one of the following conditions holds:
 \\*[\smallskipamount]
\textup{(1)} \; $\langle \tilde e, \rho_{e} \rangle = 0$; \\
\textup{(2)} \; $\ph = c t^{-v_0(e)} \in \Phi_e^{\times}, \; c \in \mathbb{K}, v_0(e) \in \ZZ.$
\end{theorem}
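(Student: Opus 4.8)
The plan is to deduce the criterion directly from the bracket computations recorded in Lemma~\ref{lem2}. Write $D := [\df_{e,\ph},\df_{\tilde e}]$; being a commutator of derivations, $D$ is again a derivation of $A=A[Y,\mf{D}]$, homogeneous of degree $e+\tilde e$, so it extends uniquely to a derivation of the fraction field $\text{Quot}(A)$, and $D$ vanishes on $A$ if and only if that extension vanishes on a set of field generators of $\text{Quot}(A)$ over $\KK$. Since $\mf{D}(0)=0$ we have $A_0=\KK[Y]=\KK[t]$, so $t\in A$; moreover $\KK(Y)=\KK(t)\subseteq\text{Quot}(A)$ and each $\chi^m$ lies in $\text{Quot}(A)$, so $\text{Quot}(A)$ is generated over $\KK$ by $t$ together with the characters $\chi^m$, $m\in M$. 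Hence $D=0$ if and only if $D(t)=0$ and $D(\chi^m)=0$ for every $m\in M$; both of these are exhibited by Lemma~\ref{lem2}, so the argument reduces to reading off when those two expressions vanish.

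I would first analyse $D(t)$. By Lemma~\ref{lem2}(1), $D(t)=d\,\langle\tilde e,\rho_e\rangle\,\ph\,t^{s+1}\chi^{e+\tilde e}$, in which $d\ge1$ is invertible in $\KK$, the function $\ph\in\Phi_e^{\times}$ is nonzero, and $t^{s+1}\chi^{e+\tilde e}$ is a nonzero monomial of $\text{Quot}(A)$; thus $D(t)=0$ is equivalent to $\langle\tilde e,\rho_e\rangle=0$, which is condition~(1). Assume now condition~(1). Then Lemma~\ref{lem2}(2) collapses to
$$
D(\chi^m)=-\,d\,\langle m,\rho_e\rangle\bigl(\ph' t+v_0(e)\,\ph\bigr)\,t^{s}\chi^{m+e+\tilde e}.
$$
Since the primitive generator $\rho_e\in N$ is nonzero, there is an $m\in M$ with $\langle m,\rho_e\rangle\ne0$, and for such $m$ the vanishing of $D(\chi^m)$ forces $\ph' t+v_0(e)\ph=0$; conversely this one identity makes $D(\chi^m)=0$ for all $m$, since the bracket is linear in $m$. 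So, granted condition~(1), commutation is equivalent to the equation $t\ph'=-v_0(e)\ph$ in $\KK(t)$.

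Finally I would solve $t\ph'=-v_0(e)\ph$ with $\ph\in\KK(t)\setminus\{0\}$. The equation prevents $\ph$ from having any zero or pole at a point $t\ne0$, so $\ph$ is a constant times an integral power of $t$; comparing exponents then gives $\ph=c\,t^{-v_0(e)}$ with $c\in\KK^{\times}$ and $v_0(e)\in\ZZ$ — this is condition~(2). For the converse one substitutes $\langle\tilde e,\rho_e\rangle=0$ and $\ph=c\,t^{-v_0(e)}$ into Lemma~\ref{lem2} and checks directly that $D(t)=0$ and $D(\chi^m)=0$ for every $m$, so $D=0$. I expect the only genuinely delicate step to be this rational-function argument: it is what rules out functions $\ph$ with more than one Laurent term and what forces the integrality $v_0(e)\in\ZZ$; the rest follows formally from the bracket identities of Lemma~\ref{lem2}, which carry the real weight of the argument.
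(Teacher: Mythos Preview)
Your proof is correct and follows the same approach as the paper's: both reduce via Lemma~\ref{lem2} to the vanishing of the commutator on $t$ and on each $\chi^m$, extract condition~(1) from $D(t)=0$, and then (assuming~(1)) obtain condition~(2) by solving $t\ph'+v_0(e)\ph=0$. Your explicit justification for why testing on $t$ and the $\chi^m$ suffices (via extension to the fraction field) and your pole-analysis argument for the ODE are minor embellishments absent from the paper, which simply invokes the standard solution of the separable equation.
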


\begin{proof}
If the commutator is equal to zero, the first expression of Lemma~\ref{lem2} is equal to zero as well:
$$
[\df_{e,\ph},\df_{\tilde e}](t) = d \cdot \langle \tilde e, \rho_{e} \rangle \cdot \ph \cdot t^{s+1} \chi^{e + \tilde e} = 0.
$$
If $\ph \equiv 0$ then the derivation of vertical type is identically zero and the statement is clear. Otherwise $\ph$, $t$, $\chi$ are not identically zero. Further, $d$ is a positive integer. Hence, the commutator applied to function $t$ vanishes if and only if the pairing $\langle \tilde e, \rho_{e} \rangle$ equals zero.

Taking into account this remark we obtain the following expression for the second condition of Lemma~\ref{lem2}:
$$
[\df_{e,\ph},\df_{\tilde e}](\chi^m) = -d  \cdot \langle m, \rho_{e} \rangle \cdot  t^s\chi^{m+e+\tilde e} \cdot \bigl(\ph' \cdot t + \ph \cdot  v_0(e) \bigr).
$$
The last expression is equal to zero if and only if
$$
\ph' \cdot t + \ph \cdot  v_0(e) = 0.
$$
But this is a separable differential equation in $\ph$. Recall that the solution is a family of exponential functions $c t^{-v_0(e)}$, where $c \in \mathbb{K}$. Finally, $\ph$ belongs to $\Phi_e^{\times}$ by definition. This concludes the proof. 
\end{proof}

\begin{remark}
The condition $c t^{-v_0(e)} \in \Phi_e^{\times}$  means that ${\mathfrak{D}}(e) - v_0(e) \cdot \{0\} \ge 0$.
\end{remark}

\begin{remark}
Under the restrictions of Theorem~\ref{t4} the kernels of homogeneous locally nilpotent derivations do not coincide and general $\mathbb{G}_a^2$-orbits on $X$ are 2-dimensional.
\end{remark}

\section{Commuting derivations of horizontal type}
\label{s7}

As before, we suppose $Y = \mathbb{A}^1$. One difficulty in working with derivations of horizontal type is that different derivations can be simplified as in~(\ref{f-hor}) in different systems of generators. Let $\df_{e}$ and $\df_{\tilde e}$ be homogeneous locally nilpotent derivations of horizontal type corresponding to  $(\widehat{\mathfrak{D}}_v, e)$ and $(\widehat{\mathfrak{D}}_{\tilde v}, \tilde e)$ respectively, where 
$$
\widehat{\mathfrak{D}}_v = \{ \mathfrak{D}=\sum_{z \in  \mathbb{A}^1} \Delta_z \cdot z; v_z \} \text{  and  } \widehat{\mathfrak{D}}_{\tilde v} = \{ \mathfrak{D}=\sum_{z \in  \mathbb{A}^1} \Delta_z \cdot z; \tilde{v}_z \},
$$
and $z_0$, $\tilde{z}_0$ are marked points. Further we suppose $\tilde{z}_0$ = 0. By Corollary~\ref{AH-crl} we can assume that $v_z$ = 0 for all $z \ne z_0$. Hence, by introducing $q=t-z_0$ we have
$$
\df_e(q^r\chi^m) = d(v_{z_0}(m)+r)\cdot q^{r+s} \cdot \chi^{m+e} \quad \text{for all } (m,r) \in M\oplus\mathbb{Z}.
$$
To obtain the similar expression for $\df_{\tilde e}$, let us consider $\mathfrak{D}' = \mathfrak{D} - \sum_{z\ne 0} (\tilde{v}_z+\sigma)\cdot z$. By Corollary~\ref{AH-crl} there exists an isomorphism of varieties $X[\mathbb{A}^1,\mathfrak{D}] \to X[\mathbb{A}^1,\mathfrak{D}']$ given by the family of functions $\ph^m \in \mathbb{K}(t)$ with divisors $\mathfrak{D}'(m)-\mathfrak{D}(m)$ and $\ph^m \cdot \ph^{m'} = \ph^{m+m'}$. As a result $\tilde{v}'_z = 0$ where $z\ne 0$ for the divisor $\mathfrak{D}'$. Thus, we obtain
$$
\df_{\tilde e}(t^r\cdot\ph^m\chi^m) = \tilde{d}(\tilde{v}_{0}(m)+r)\cdot t^{r+\tilde{s}} \cdot \ph^{m+\tilde{e}}\chi^{m+\tilde{e}} \quad \text{for all } (m,r) \in M\oplus\mathbb{Z}.
$$
On Figure~3 colored vertices for marked points of corresponding divisor are marked with asterisks:
\begin{center}
\begin{tikzpicture}
\draw[gray,-latex] (-5,0) -- (5,0);
\draw (5,0) node[anchor=north] {$\AA^1$};
% Define the points
\path (0,0) coordinate (origin);
\path (3,0) coordinate (right);
\path (-3,0) coordinate (left);

% Draw points
\filldraw  (0,0) circle (1pt) 
                    node[anchor=north] {\small $z_1 = \tilde z_0 = 0$}
           (3,0) circle (1pt) 
                    node[anchor=north] {\small $z_2$}
           (-3,0) circle (1pt) 
                    node[anchor=north] {\small $z_0$};

% Draw polyhedrons 1
\draw (-3,0.6) -- 
      ++(-0.5,0) --
      ++(0,0.5);
\draw[dotted]   (-3,0.6) --
                ++(0.5,0) --
                ++(0,0.5);
\draw (-3,1.6) -- 
      ++(0.5,0) --
      ++(0,-0.5);
\draw[dotted]   (-3,1.6) --
                ++(-0.5,0) --
                ++(0,-0.5);
% Draw polyhedrons 2
\draw (0,0.6) -- 
      ++(-0.5,0) --
      ++(0,0.5);
\draw[dotted]   (0,0.6) --
                ++(0.5,0) --
                ++(0,0.5);
\draw (0,1.6) -- 
      ++(0.5,0) --
      ++(0,-0.5);
\draw[dotted]   (0,1.6) --
                ++(-0.5,0) --
                ++(0,-0.5);
% Draw polyhedrons 3
\draw (3,0.6) -- 
      ++(-0.5,0) --
      ++(0,0.5);
\draw[dotted]   (3,0.6) --
                ++(0.5,0) --
                ++(0,0.5);
\draw (3,1.6) -- 
      ++(0.5,0) --
      ++(0,-0.5);
\draw[dotted]   (3,1.6) --
                ++(-0.5,0) --
                ++(0,-0.5);

% Add nodes
\filldraw  (-3.5,0.6) circle (1pt) 
                    node[anchor=east] {\small $v_{z_0}$}
           (-2.5,1.6) circle (1pt) 
                    node[anchor=west] {\small $\tilde v_{z_0}$}
           (-0.5,0.6) circle (1pt) 
                    node[anchor=east] {\small $v_{z_1}$}
           (0.5,1.6) circle (1pt) 
                    node[anchor=west] {\small $\tilde v_0$}
           (2.5,0.6) circle (1pt) 
                    node[anchor=east] {\small $v_{z_2}$}
           (3.5,1.6) circle (1pt) 
                    node[anchor=west] {\small $\tilde v_{z_2}$};
                    
% Marked points nodes
\draw (-3.5,0.6) node[anchor=north] {\small $\star$}
      (0.5,1.6) node[anchor=south] {\small $\star$};
                    
% Polihedrons nodes
\draw[gray] (-3,0) node[anchor=south] {\footnotesize $\phantom{_{z_0}}\Delta_{z_0}$}
            (0,0) node[anchor=south]  {\footnotesize $\phantom{_{z_0}}\Delta_{z_1}$}
            (3,0) node[anchor=south]  {\footnotesize $\phantom{_{z_0}}\Delta_{z_2}$};
                    
\end{tikzpicture}
\\ \small{\textsc{Fig. 3}}
\end{center}

\begin{definition}
    A system of generators of the algebra $A$ is said to be \textit{associated} with derivations $\df_{e}$ and $\df_{\tilde e}$ if these derivations is given by
    \begin{align*}%
    \df_e(q^r\chi^m) &= d(v_{z_0}(m)+r)\cdot q^{r+s} \cdot \chi^{m+e}, \quad q = t - z_0 \\
    \df_{\tilde e}(t^r\cdot\ph^m\chi^m) &= \tilde{d}(\tilde{v}_{0}(m)+r)\cdot t^{r+\tilde{s}} \cdot \ph^{m+\tilde{e}}\chi^{m+\tilde{e}}.
    \end{align*}
\end{definition}

\begin{definition}
    A derivation $\df$ is said to be \textit{simple} in the fixed system of generators of the algebra $A$, if all the vertices $v$ of the pair $(\widehat{\mathfrak{D}}_v, e)$ is equal to zero with at most one exception.
\end{definition}

Note that $\df_{e}$ is always simple in the system of generators \textit{associated} with derivations $\df_{e}$ and $\df_{\tilde e}$.

The proof of the next lemma is given in Appendix, because it is rather long and technical. By $\alpha_m$ we denote $t(\ph^m)'/\ph^m$.

\begin{lemma}\label{lm3}
Let $\df_{e}$ and $\df_{\tilde e}$ be homogeneous locally nilpotent derivations of horizontal type on the algebra $A = A[Y,\mf{D}]$ with vectors of parameters $(e,v_{z_0},v_z,d)$, $z \ne z_0$, and $(\tilde e,\tilde v_{0},\tilde v_z,\tilde d)$, $z \ne 0$ respectively. Then the following equalities hold in the system of generators associated with $\df_{e}$ and $\df_{\tilde e}$:
\begin{align}
&[\df_{e},\df_{\tilde e}](t) = d \tilde d \cdot t^{\tilde s} q^s \cdot \ph^{\tilde e}\chi^{e+\tilde e} \cdot B,\label{com-t}\\
&[\df_{e},\df_{\tilde e}](\chi^m) = d\tilde d\cdot t^{\tilde s-1} q^{s-1} \cdot \ph^{\tilde e}\cdot \chi^{m+e+\tilde e} \bigl(C_0 + C_{1} + C_{2}\bigr), \label{com-chi}
\end{align} where
\begin{align}
\label{dt}
B_{\phantom{0}} &= (\tilde{v}_0(e)-\tilde s - v_{z_0}(\tilde e)+s)t - (\tilde{v}_0(e)-\tilde s-1)z_0 - (\alpha_e+\alpha_{\tilde e})q  \\*[\smallskipamount]
 C_{0} &= \tilde s\tilde{v}_0(m)q^2  - sv_{z_0}(m)t^2 +\bigl(\tilde{v}_0(m) v_{z_0}(\tilde e)- v_{z_0}(m)\tilde{v}_0(e)\bigr)t q   \label{C_0}\\
 C_{1} &= \tilde{v}_0(m)\alpha_{\tilde e}q^2-\tilde s\alpha_m q^2 - v_{z_0}(\tilde e)\alpha_m tq + v_{z_0}(m)\alpha_e t q    \label{C_1}\\
 C_{2} &= -t\alpha'_m q^2 - \alpha_m\alpha_{\tilde e}q^2.   \label{C_2}
\end{align}
\end{lemma}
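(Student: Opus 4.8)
The plan is to compute the commutator $[\df_e,\df_{\tilde e}]$ directly by evaluating it on the two kinds of generators that jointly determine a homogeneous derivation in the associated system of generators, namely the element $t\in\KK[Y]$ and the homogeneous elements $\chi^m$ (or more precisely $\ph^m\chi^m$), exactly as was done in Lemma~\ref{lem2} for the mixed vertical–horizontal case. Since both derivations are homogeneous of degrees $e$ and $\tilde e$, the commutator is again a homogeneous derivation of degree $e+\tilde e$, so it suffices to know its values on a generating set; and since every element of $A$ is a polynomial in $t$ and the $\ph^m\chi^m$, the Leibniz rule reduces everything to these two computations.

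First I would record the two basic formulas in the associated system: $\df_e(q^r\chi^m)=d(v_{z_0}(m)+r)q^{r+s}\chi^{m+e}$ with $q=t-z_0$, and $\df_{\tilde e}(t^r\ph^m\chi^m)=\tilde d(\tilde v_0(m)+r)t^{r+\tilde s}\ph^{m+\tilde e}\chi^{m+\tilde e}$. The key technical nuisance is that $\df_e$ is written in the variable $q$ while $\df_{\tilde e}$ is written in $t$, and that the rational functions $\ph^m$ enter when applying $\df_e$ to $t^r\ph^m\chi^m$ (since $\ph^m\notin\ker\df_e$ in general) and when applying $\df_{\tilde e}$ to $q^r\chi^m=(t-z_0)^r\chi^m$. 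So I would first compute $\df_e(t)$ and $\df_{\tilde e}(t)$, then $\df_e(\ph^m\chi^m)$ and $\df_{\tilde e}(q^r\chi^m)$, using the chain rule: $\df_e(\ph^m)=(\ph^m)'\df_e(t)$ and similarly for $\df_{\tilde e}$, where the prime is $d/dt$. Introducing $\alpha_m=t(\ph^m)'/\ph^m$ is precisely the bookkeeping device that keeps the logarithmic derivative of $\ph^m$ manageable and makes $\df_e(\ph^m)=\alpha_m\,\ph^m\,\df_e(t)/t$. For $[\df_e,\df_{\tilde e}](t)$ I would expand $\df_e(\df_{\tilde e}(t))-\df_{\tilde e}(\df_e(t))$, convert all powers of $q$ near $t$ via $q=t-z_0$ only where needed, and collect terms by monomial in $t$ and $q$; the coefficient is the quantity $B$ in~(\ref{dt}), whose three summands are the $t$-linear part, the $z_0$-constant part, and the $\alpha_e+\alpha_{\tilde e}$ part coming from differentiating the $\ph$-factors.

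For $[\df_e,\df_{\tilde e}](\chi^m)$ the same scheme applies but is longer: apply $\df_{\tilde e}$ then $\df_e$ to $\chi^m$ (viewed as $t^0\ph^0\chi^m$ with $\ph^0=1$, so no $\alpha_0$ subtlety, but the intermediate term $\df_{\tilde e}(\chi^m)$ already carries a factor $t^{\tilde s}\ph^{\tilde e}\chi^{m+\tilde e}$ on which $\df_e$ must act, reintroducing $\alpha_{\tilde e}$ and even $\alpha'_m$-type terms when the product rule hits $\ph^{\tilde e}$), and symmetrically. The resulting coefficient splits naturally into three groups sorted by how many logarithmic-derivative factors appear: $C_0$ has none (the ``purely polyhedral'' part, quadratic in $t,q$ with coefficients built from $\tilde s,s,v_{z_0},\tilde v_0$ evaluated on $m$ and on $e,\tilde e$), $C_1$ is linear in $\alpha$'s, and $C_2$ collects the $\alpha'_m$ and $\alpha_m\alpha_{\tilde e}$ terms. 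I expect the main obstacle to be purely organizational: correctly tracking every application of the product and chain rules — in particular the term $-t\alpha'_m q^2$ in $C_2$, which arises only because one of the compositions differentiates $\ph^m$ a second time through the dependence of $\alpha_m$ on $t$ — and then faithfully rewriting the mixture of $q$-powers and $t$-powers into the normalized form $t^{\tilde s-1}q^{s-1}$ times a polynomial in $t$ and $q$. Once the bookkeeping is set up with the $\alpha$-notation, each step is a routine, if lengthy, expansion, which is why the paper defers it to the Appendix; I would present the computation by displaying $\df_e\df_{\tilde e}$ and $\df_{\tilde e}\df_e$ applied to each generator, subtract, and then simply regroup terms to match~(\ref{dt})–(\ref{C_2}).
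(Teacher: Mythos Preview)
Your proposal is correct and follows essentially the same approach as the paper's proof in the Appendix: compute $\df_e$ and $\df_{\tilde e}$ on the building blocks $t$, $q$, $\chi^m$, $\ph^m$ via the chain and Leibniz rules, derive the key identity $\df_{\tilde e}(\chi^m)=\tilde d(\tilde v_0(m)-\alpha_m)t^{\tilde s}\ph^{\tilde e}\chi^{m+\tilde e}$, then expand both orders of composition on $t$ and on $\chi^m$ and factor out $d\tilde d\,t^{\tilde s}q^s\ph^{\tilde e}\chi^{e+\tilde e}$ (resp.\ $d\tilde d\,t^{\tilde s-1}q^{s-1}\ph^{\tilde e}\chi^{m+e+\tilde e}$). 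One small inaccuracy: the $-t\alpha'_m q^2$ term in $C_2$ does not come from the product rule hitting $\ph^{\tilde e}$ but from $\df_e$ acting on the coefficient $\alpha_m$ itself in $\df_{\tilde e}(\chi^m)$, via $\df_e(\alpha_m)=\alpha'_m\,\df_e(t)$; otherwise your bookkeeping plan matches the paper's computation exactly.
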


Further we need an explicit form for $\alpha_m$.
%t\dfrac{d}{dt} \ln(\ph^m) =
By Corollary~\ref{AH-crl} we have 
$$
\text{div}(\ph^m) = \mathfrak{D}'(m)-\mathfrak{D}(m) = - \sum_{z\ne 0} (\tilde{v}_z+\sigma)\cdot z.
$$
It follows that $\ph^m = \prod_{z \ne 0}(t-z)^{-\tilde{v}_z(m)}$ and
\begin{equation}\label{am}
\alpha_m = -\sum_{z \ne 0} \tilde{v}_z(m) \dfrac{t}{t-z}, \quad 
\alpha_m' = \sum_{z \ne 0} \tilde{v}_z(m) \dfrac{z}{(t-z)^2}.
\end{equation}
Let us enumerate all points $z$ such that $\Delta_z \ne \sigma$ with indexes from $1$ to $l$, $z_k \ne 0, \,z_k \ne z_0$. We introduce the following notation: $\mu(t) = \prod_{k=1}^l (t-z_k)$, $\mu_k(t) = \prod_{i\ne k} (t-z_i)$. Then we obtain
\begin{equation}\label{amm}
\alpha_m = -t\cdot \sum_{z_k \ne 0} \tilde{v}_{z_k}(m)\dfrac{\mu_k(t)}{\mu(t)}, \quad 
\alpha_m' = \sum_{z_k \ne 0} z_k\tilde{v}_{z_k}(m) \dfrac{\mu_k(t)^2}{\mu(t)^2}.
\end{equation}
Besides, from Definition~\ref{def-hor-2} (2) it follows that 
\begin{equation}\label{s}
s = -1/d - v_{z_0}(e), \quad
\tilde s = -1/\tilde{d} - \tilde{v}_0(\tilde e).
\end{equation}
Since $v_z = 0$ for all $z \ne z_0$, expressions $\tilde v_z(e) \ge 1 + v_z(e)$ and $v_z(\tilde e) \ge 1 + \tilde v_z(\tilde e)$ of Definition~\ref{def-hor-2} (2) can be simplified:
\begin{equation}\label{ineq-v_z}
\tilde v_z(e) \ge 1, \quad
\tilde v_z(\tilde e) \le -1
\end{equation}
for all $\tilde v_z \ne 0$. Our aim is to find necessary and sufficient conditions of vanishing the commutators from Lemma~\ref{lm3}. We first make several simplifying assumptions. 

\begin{definition}
    Homogeneous locally nilpotent derivations $\df_{e}$ and $\df_{\tilde e}$ are called \textit{adjacent}, if colored vertices and marked points of corresponding pairs $(\widehat{\mathfrak{D}}_v, e)$ and $(\widehat{\mathfrak{D}}_{\tilde v},\tilde e)$ are the same.
\end{definition}

Note that $z_0 = \tilde z_0 = 0$ and $v_{z_0} = \tilde v_{\tilde z_0} = v_0$ in an associated system of generators in this case.

\begin{propos}\label{th_3_1}
    Let $\df_{e}$ and $\df_{\tilde e}$ be homogeneous locally nilpotent derivations of horizontal type on the algebra $A = A[Y,\mf{D}]$
    
    Then $\df_{e}$ and $\df_{\tilde e}$ commute if and only if the following conditions hold in the system of generators associated with $\df_{e}$ and $\df_{\tilde e}$:
    \begin{enumerate}
        \item $\tilde v_z = 0$ for all $z \ne 0;$
        \item $v_0 \in N$, $\tilde v_0 \in N$, there exists a point of the divisor $z_1$ such that $\tilde v_{z_1}(e) = 1$, $\tilde v_{z_1}(\tilde e) = -1$ and all $\tilde v_z = 0$ for $z \ne 0$, $z \ne z_1$.
    \end{enumerate}
\end{propos}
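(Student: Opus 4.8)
The plan is to work directly with the commutator formulas in Lemma~\ref{lm3} and to analyze when the polynomials $B$ and $C_0+C_1+C_2$ vanish identically. First I would observe that $[\df_e,\df_{\tilde e}]=0$ is equivalent to the simultaneous vanishing of $B$ (from \eqref{com-t}) and of $C_0+C_1+C_2$ for every $m$ (from \eqref{com-chi}), since $t$, $q$, $\ph^{\tilde e}$, $\chi^{e+\tilde e}$ are nonzero and $d,\tilde d>0$. So the whole argument reduces to identities among rational functions in $t$.

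The heart of the matter is to show that these identities force $\tilde v_z=0$ for all $z\neq 0$ except possibly one point $z_1$, and to pin down $z_1$. I would plug the explicit form \eqref{amm} of $\alpha_m$, $\alpha_m'$ into $B$, $C_0$, $C_1$, $C_2$, clear the denominator $\mu(t)^2$, and treat the result as a polynomial identity in $t$. Because $\alpha_e$, $\alpha_{\tilde e}$ each contribute a factor with poles exactly at the points $z_k$ where $\tilde v_{z_k}\ne 0$, the terms $\alpha_e\alpha_{\tilde e}q^2$ and $t\alpha'_m q^2$ in $C_2$ have poles of order two at such $z_k$, while the remaining terms of $C_0+C_1$ have at most simple poles there (coming from a single $\alpha$). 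Comparing the order-two parts of the Laurent expansion at each $z_k\neq 0$ should yield, for each such $k$, a relation of the shape $\tilde v_{z_k}(e)\,\tilde v_{z_k}(\tilde e) = -\tilde v_{z_k}(m)$-type constraints combined with $\tilde v_{z_k}(e)(\tilde v_{z_k}(\tilde e)+1)=0$ and $\tilde v_{z_k}(\tilde e)(\tilde v_{z_k}(e)-1)=0$ (exactly parallel in spirit to Proposition~\ref{prop1}); using the sign restrictions \eqref{ineq-v_z}, namely $\tilde v_{z_k}(e)\ge 1$ and $\tilde v_{z_k}(\tilde e)\le -1$, this forces $\tilde v_{z_k}(e)=1$, $\tilde v_{z_k}(\tilde e)=-1$, and — since these conditions must hold for the pairing against every $m$ — that $\tilde v_{z_k}$ is determined, so at most one such $z_k$ can occur. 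That at most one $z_1\ne 0$ has $\tilde v_{z_1}\ne 0$ gives item~(1) for the remaining points and the structure in item~(2). One then checks $v_0\in N$ and $\tilde v_0\in N$: the colored divisor allows a non-lattice vertex only at the marked point, and once a genuine second colored point $z_1$ with non-lattice data is excluded, integrality at $0$ follows (here one uses $d=\tilde d=1$, forced by $\tilde v_{z_1}\in N$ and \eqref{s}).

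After the structural reduction, the converse direction is a direct verification: assuming (1) and (2), substitute $\tilde v_z=0$ for $z\ne 0,z_1$, $\tilde v_{z_1}(e)=1$, $\tilde v_{z_1}(\tilde e)=-1$, $d=\tilde d=1$, $v_0,\tilde v_0\in N$ into \eqref{dt}--\eqref{C_2}, simplify $\alpha_e=-t/(t-z_1)$, $\alpha_{\tilde e}=t/(t-z_1)$, $\alpha_m=-\tilde v_{z_1}(m)\,t/(t-z_1)$, and check that $B\equiv 0$ and $C_0+C_1+C_2\equiv 0$; the relations \eqref{s} applied with $d=\tilde d=1$ (so $s=-1-v_0(e)$, $\tilde s=-1-\tilde v_0(\tilde e)$) make the coefficient of $t$ in $B$ vanish, and the remaining polynomial cancellations are bookkeeping. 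I expect the main obstacle to be the bookkeeping in the forward direction: carefully extracting the principal parts of $C_0+C_1+C_2$ at each $z_k$, distinguishing contributions of order one and order two, and making sure the deduction $\tilde v_{z_k}(e)=1,\ \tilde v_{z_k}(\tilde e)=-1$ is forced rather than merely allowed — in particular handling the case where $\tilde v_{z_k}(m)=0$ for the particular $m$ at hand by varying $m$, and confirming no cancellation between the order-two term of $t\alpha'_m q^2$ and that of $\alpha_m\alpha_{\tilde e}q^2$ can save a second bad point.
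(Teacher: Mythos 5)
Your reduction to the simultaneous vanishing of $B$ and $C_0+C_1+C_2$, and your local analysis at the points $z_k$ with $\tilde v_{z_k}\ne 0$, reproduce the first half of the paper's argument: clearing the denominator $\mu(t)^2$ and comparing order-two principal parts (equivalently, substituting $t=z_k$ into the cleared numerator, as the paper does) gives $\tilde v_{z_k}(m)\bigl(1+\tilde v_{z_k}(\tilde e)\bigr)=0$ for all $m$, which together with $B=0$ (i.e.\ $\tilde v_z(e+\tilde e)=0$ for $z\ne 0$, via Lemma~\ref{lm4}) and (\ref{ineq-v_z}) forces $\tilde v_{z_k}(e)=1$, $\tilde v_{z_k}(\tilde e)=-1$ at every exceptional point. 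The genuine gap is the next step, which is exactly where the content of condition (2) lies: your claim that ``since these conditions must hold for every $m$, $\tilde v_{z_k}$ is determined, so at most one such $z_k$ can occur'' is a non sequitur. The two pairings $\tilde v_{z_k}(e)=1$, $\tilde v_{z_k}(\tilde e)=-1$ do not determine the vector $\tilde v_{z_k}$ (the rank of $N$ is arbitrary), and even if they did, the same vertex datum at two distinct points would not by itself be contradictory. In fact a configuration with two exceptional points satisfies all the order-two (double-pole) conditions at each point separately; what excludes it are the cross-terms between different points in $\alpha_m\alpha_{\tilde e}$ at the level of simple poles, or equivalently a global degree condition, and your purely local plan never produces such a constraint.

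The paper obtains the missing global constraint from the coefficient of the leading monomial of the polynomial identity obtained from $C_0+C_1+C_2=0$ after clearing denominators: it reads $\bigl(\sum_k\tilde v_{z_k}(m)\bigr)\bigl(1+d\sum_k\tilde v_{z_k}(\tilde e)\bigr)=0$ for all $m$; since each exceptional point contributes $-1$ to $\sum_k\tilde v_{z_k}(\tilde e)$, either there is no exceptional point (condition (1)), or their number $p$ satisfies $dp=1$, whence $p=1$, $d=\tilde d=1$ and $v_0=\tilde v_0\in N$ (condition (2)). This is also where the lattice statements come from: your remark that $d=\tilde d=1$ is ``forced by $\tilde v_{z_1}\in N$ and (\ref{s})'' is incorrect, because $d$ is attached to the marked vertex $v_{z_0}=\tilde v_0$, not to $\tilde v_{z_1}$. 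Two further points to repair in a write-up: the proposition's two conditions are alternatives, so sufficiency must be checked separately for (1) (all $\alpha$'s vanish, immediate) and (2); and the proof uses the adjacency hypotheses $z_0=\tilde z_0=0$, $v_{z_0}=\tilde v_0$ (hence $d=\tilde d$), which is what makes $C_0=0$ and $B=-(\alpha_e+\alpha_{\tilde e})t$ — these should be invoked explicitly before the local analysis starts.
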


\begin{proof}
    We can simplify expressions for commutators obtained in Lemma~\ref{lm3} under the additional conditions of the proposition. Firstly, note that by definition $d = \tilde d$ and then by (\ref{s}) we obtain $\tilde s + v_{0}(\tilde e) = s + v_{0}(e) = -1/d$. Using Lemma~\ref{lm3} we have $B =  -(\alpha_e+\alpha_{\tilde e})t$. The application of Lemma~\ref{lm4} yields that $B=0$ if and only if $\tilde v_z(e)+\tilde v_z(\tilde e) = 0$ for all $z \ne 0$. Then (\ref{C_0}--\ref{C_2}) is given by
    \begin{align*}
        C_{0} &= 0 \\
        C_{1} &= 1/d \cdot \alpha_m t^2 \\
        C_{2} &= -t\alpha'_m t^2 - \alpha_m\alpha_{\tilde e}t^2.
    \end{align*}

    The commutator vanishes if and only if $C_0 + C_1 + C_2 = 0$. Using (\ref{amm}) for all expressions with $\alpha$ we obtain
    \begin{align*}
           1/d \cdot t \cdot\sum_{k = 1}^l  \tilde{v}_{z_k}(m)\dfrac{\mu_k(t)}{\mu(t)} - t \cdot\sum_{k = 1}^l  z_k\tilde{v}_{z_k}(m)\dfrac{\mu_k(t)^2}{\mu(t)^2} -\\- t^2 \cdot\sum_{k = 1}^l  \tilde{v}_{z_k}(m)\dfrac{\mu_k(t)}{\mu(t)}\cdot\sum_{k = 1}^l  \tilde{v}_{z_k}(\tilde e)\dfrac{\mu_k(t)}{\mu(t)} = 0.
    \end{align*}
    Then for the numerator of this fraction we obtain
    \begin{align*}
           1/d \cdot t \cdot\sum_{k = 1}^l  \tilde{v}_{z_k}(m)\mu_k(t)\mu(t) - t \cdot\sum_{k = 1}^l  z_k\tilde{v}_{z_k}(m)\mu_k(t)^2 -\\- t^2 \cdot\sum_{k = 1}^l  \tilde{v}_{z_k}(m)\mu_k(t)\cdot\sum_{k = 1}^l  \tilde{v}_{z_k}(\tilde e)\mu_k(t) = 0.
    \end{align*}
    Substituting $z_j$ for $t$ we have $\tilde{v}_{z_j}(m)(1+\tilde{v}_{z_j}(\tilde e))=0$. The coefficient at the leading monomial should be zero.  It follows that $\sum_{k = 1}^l\tilde{v}_{z_j}(m)\bigl(1+d\sum_{k = 1}^l\tilde{v}_{z_j}(\tilde e)\bigr)=0$. Using (\ref{ineq-v_z}) we get $\tilde{v}_{z_j}(\tilde e) \le -1$ for $\tilde{v}_{z_j} \ne 0$. 
    
    Here only two cases are possible. The first one is that $\tilde v_{z_j}=0$. This corresponds to the condition~1 of the proposition. The other possibility is that there exists a point $z_1 \ne 0$ such that $\tilde{v}_{z_1}(\tilde e) = -1$, $d=1$, $\tilde v_{z_j} = 0$ for $j \ne 1$. Besides, using $\tilde v_z(e)+\tilde v_z(\tilde e) = 0$ we have $\tilde{v}_{z_1}(e) = 1$. By this we obtain the second condition of the proposition. 
   The sufficiency of conditions is verified by a direct substitution.
\end{proof}

Let us consider coherency conditions (2) and (3) of Definition~\ref{def-hor-2} for $(\widehat{\mathfrak{D}}, e)$. Let $\df_{e}$ and $\df_{\tilde e}$ be derivations with vertices $V = \{v_{z_0}, \dots, v_{z_l}\}$ and $\tilde V = \{\tilde v_{z_0}, \dots, \tilde v_{z_l} \}$, where $z_0$ and $\tilde z_0$ are corresponding marked vertices ($\tilde z_0 = z_k$ for some $k$). Then conditions (2) and (3) is given by
\begin{align*}
    \tilde v_{z_k}(e) &\ge 1 + v_{z_k}(e), \quad \text{for } \tilde v_{z_k} \ne v_{z_k},\; k \ne 0 \\
    v_{z_k}(\tilde e) &\ge 1 + \tilde v_{z_k}(\tilde e), \quad \text{for } v_{z_k} \ne \tilde v_{z_k},\; k \ne 0 \\
    d\tilde v_{z_0}(e) &\ge 1 + d v_{z_0}(e), \quad \text{for } \tilde v_{z_0} \ne v_{z_0} \\
    \tilde d v_{\tilde z_0}(\tilde e) &\ge 1 + \tilde d \tilde v_{\tilde z_0}(\tilde e), \quad \text{for } v_{\tilde z_0} \ne \tilde v_{\tilde z_0}.
\end{align*}
The first and the second correspond to not marked points, the third and the second one correspond to marked ones. Thus, each point $z_k$ is associated with two \textit{coherency inequalities}: the first one depends on $e$,  the second one depends on $\tilde e$.

\begin{definition}
     Homogeneous locally nilpotent derivations $\df_{e}$ and $\df_{\tilde e}$ corresponding to pairs $(\widehat{\mathfrak{D}}_v, e)$ and $(\widehat{\mathfrak{D}}_{\tilde v}, \tilde e)$ with vertices $V = \{v_{z_0}, \dots, v_{z_l}\}$ and $\tilde V = \{\tilde v_{z_0}, \dots, \tilde v_{z_l} \}$, are called \textit{coherent}, if for every point~$z_k$ one of the following conditions holds: $v_{z_k} = \tilde v_{z_k}$ or both of coherency inequalities for $z_k$ become equalities.
\end{definition}

Figure~4 illustrates this definition. If a coherency inequality becomes an equality then we draw an arrow starting from a corresponding vertex. If a point $z_k$ is marked then an arrow is dotted.

\begin{center}
\begin{tikzpicture}
\draw[gray,-latex] (-5,0) -- (5,0);
\draw (5,0) node[anchor=north] {$\AA^1$};
% Define the points
\path (0,0) coordinate (origin);
\path (3,0) coordinate (right);
\path (-3,0) coordinate (left);

% Draw points
\filldraw  (0,0) circle (1pt) 
                    node[anchor=north] {\small $z_1 = \tilde z_0$}
           (3,0) circle (1pt) 
                    node[anchor=north] {\small $z_2$}
           (-3,0) circle (1pt) 
                    node[anchor=north] {\small $z_0$};

% Draw polyhedrons 1
\draw (-3,0.6) -- 
      ++(-0.5,0) --
      ++(0,0.5);
\draw[dotted]   (-3,0.6) --
                ++(0.5,0) --
                ++(0,0.5);
\draw (-3,1.6) -- 
      ++(0.5,0) --
      ++(0,-0.5);
\draw[dotted]   (-3,1.6) --
                ++(-0.5,0) --
                ++(0,-0.5);
% Draw polyhedrons 2
\draw (0,0.6) -- 
      ++(-0.5,0) --
      ++(0,0.5);
\draw[dotted]   (0,0.6) --
                ++(0.5,0) --
                ++(0,0.5);
\draw (0,1.6) -- 
      ++(0.5,0) --
      ++(0,-0.5);
\draw[dotted]   (0,1.6) --
                ++(-0.5,0) --
                ++(0,-0.5);
% Draw polyhedrons 3
\draw (3,0.6) -- 
      ++(-0.5,0) --
      ++(0,0.5);
\draw[dotted]   (3,0.6) --
                ++(0.5,0) --
                ++(0,0.5);
\draw (3,1.6) -- 
      ++(0.5,0) --
      ++(0,-0.5);
\draw[dotted]   (3,1.6) --
                ++(-0.5,0) --
                ++(0,-0.5);

% Add nodes
\filldraw  (-3.5,0.6) circle (1pt) 
                    node[anchor=east] {\small $v_{z_0}$}
           (-2.5,1.6) circle (1pt) 
                    node[anchor=west] {\small $\tilde v_{z_0}$}
           (-0.5,0.6) circle (1pt) 
                    node[anchor=east] {\small $v_{z_1}$}
           (0.5,1.6) circle (1pt) 
                    node[anchor=west] {\small $\tilde v_{\tilde z_0}$}
           (2.5,0.6) circle (1pt) 
                    node[anchor=east] {\small $v_{z_2} = \tilde v_{z_2}$};
%           (3.5,1.6) circle (1pt) 
 %                   node[anchor=west] {\small $\tilde v_{z_2}$};

% Marked points nodes
\draw (-3.5,0.6) node[anchor=north] {\small $\star$}
      (0.5,1.6) node[anchor=south] {\small $\star$};

% Polyhedrons nodes
\draw[gray] (-3,0) node[anchor=south] {\footnotesize $\phantom{_{z_0}}\Delta_{z_0}$}
            (0,0) node[anchor=south]  {\footnotesize $\phantom{_{z_0}}\Delta_{z_1}$}
            (3,0) node[anchor=south]  {\footnotesize $\phantom{_{z_0}}\Delta_{z_2}$};

% Arrows
\draw[-latex,dashed] (-3.45,0.75) to[bend left] (-2.6,1.55);
\draw[-latex] (-2.55,1.45) to[bend left] (-3.4,0.65);

\draw[-latex] (-0.45,0.75) to[bend left] (0.4,1.55) ;
\draw[-latex,dashed] (0.45,1.45) to[bend left] (-0.4,0.65);

%\path[-latex] (2.55,0.75) edge[out=60,in=30,looseness=9] (2.65,0.65);
%\path[latex-] (2.55,0.85) edge[out=75,in=15,looseness=5] (2.8,0.65);

\end{tikzpicture}
\\ \small{\textsc{Fig. 4}}
\end{center}
\vspace{0.5em}

As a corollary of Proposition~\ref{th_3_1} we obtain

\begin{corollary}
Adjacent homogeneous locally nilpotent derivations of horizontal type $\df_{e}$ and $\df_{\tilde e}$ on the algebra $A = A[Y,\mf{D}]$ commute if and only if they are coherent and $\df_{\tilde e}$ is simple in coordinates associated with $\df_{e}$ and $\df_{\tilde e}$.
\end{corollary}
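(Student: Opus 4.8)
The plan is to obtain the corollary as a straightforward translation of Proposition~\ref{th_3_1} once the extra hypothesis of adjacency has been unwound. First I would fix a system of generators associated with $\df_e$ and $\df_{\tilde e}$ and record what adjacency says there: the marked points coincide, $z_0 = \tilde z_0 = 0$, the colored vertices at the marked point agree, $v_{z_0} = \tilde v_{\tilde z_0} =: v_0$, and by the built-in normalization of an associated system $v_z = 0$ for every $z \neq 0$. Since $v_z = 0$ for $z \neq 0$, the pair of coherency inequalities attached to such a $z$ collapses to $\tilde v_z(e) \geq 1$ and $\tilde v_z(\tilde e) \leq -1$, so the requirement that $\df_e$ and $\df_{\tilde e}$ be \emph{coherent} becomes: for each $z \neq 0$, either $\tilde v_z = 0$ or ($\tilde v_z(e) = 1$ and $\tilde v_z(\tilde e) = -1$); the two inequalities attached to $z = 0$ cannot become equalities when $v_0 = \tilde v_0$, so coherence there holds automatically through the first alternative. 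Likewise, $\df_{\tilde e}$ being \emph{simple} in these coordinates means that at most one of the vertices $\tilde v_z$, $z \in \mathbb{A}^1$, is nonzero.

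With this dictionary in place the backward implication is immediate. Assuming coherence and simplicity, simplicity gives at most one $z$ with $\tilde v_z \neq 0$; if that point is $0$ or absent we get $\tilde v_z = 0$ for all $z \neq 0$, which is condition~(1) of Proposition~\ref{th_3_1}; if it is some $z_1 \neq 0$ then $\tilde v_0 = 0 = v_0 \in N$, and coherence at $z_1$ (where $\tilde v_{z_1} \neq v_{z_1}$) forces $\tilde v_{z_1}(e) = 1$ and $\tilde v_{z_1}(\tilde e) = -1$ together with $\tilde v_z = 0$ for $z \neq 0, z_1$, which is condition~(2). In either case Proposition~\ref{th_3_1} yields $[\df_e, \df_{\tilde e}] = 0$.

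For the forward implication I would run the dictionary backwards. If the derivations commute, Proposition~\ref{th_3_1} puts us in case~(1) or~(2). In case~(1), $\tilde v_z = 0$ for $z \neq 0$, so $\tilde v_0$ is the only vertex that may be nonzero: $\df_{\tilde e}$ is simple, and coherence holds because $v_z = 0 = \tilde v_z$ for $z \neq 0$ and $v_0 = \tilde v_0$. In case~(2) one has $v_0 = \tilde v_0 \in N$; here I would first invoke Corollary~\ref{AH-crl} to shift the polyhedron at $z = 0$ by the lattice vector $-v_0$ — admissible because $\langle m, -v_0\rangle \cdot \{0\}$ is principal on $\mathbb{A}^1$ — so that, after passing to the resulting associated system, $v_0 = \tilde v_0 = 0$ while $v_z = 0$ is preserved for $z \neq 0$. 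Then the unique point $z_1$ supplied by condition~(2) carries the only nonzero vertex of $(\widehat{\mathfrak{D}}_{\tilde v}, \tilde e)$, so $\df_{\tilde e}$ is simple; and coherence holds because at $z_1$ the two coherency inequalities become the equalities $\tilde v_{z_1}(e) = 1 = 1 + v_{z_1}(e)$ and $v_{z_1}(\tilde e) = 0 = 1 + \tilde v_{z_1}(\tilde e)$, while $v_z = \tilde v_z$ at every other point.

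The only point that needs a little care is this last normalization in case~(2): one must check that shifting the marked polyhedron by $-v_0$ keeps us within the family of associated systems of generators and, in particular, does not reintroduce nonzero $v_z$ for $z \neq 0$, so that afterwards the colored datum of $\df_{\tilde e}$ genuinely has a single nonzero vertex rather than the two candidates $\tilde v_0$ and $\tilde v_{z_1}$. Apart from that, the argument is a purely mechanical match between the vocabulary \emph{coherent}/\emph{simple} and conditions~(1)--(2) of Proposition~\ref{th_3_1}.
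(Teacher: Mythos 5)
Your proposal is correct and takes essentially the same route as the paper: a direct dictionary, in an associated system of generators, between ``coherent and $\df_{\tilde e}$ simple'' and conditions (1)--(2) of Proposition~\ref{th_3_1}. You are in fact somewhat more thorough than the paper, whose written proof only carries out the (coherent $+$ simple) $\Rightarrow$ commute translation; your extra step in the converse --- shifting $\Delta_0$ by the lattice vector $-v_0$ via Corollary~\ref{AH-crl} in case (2) so that $\df_{\tilde e}$ becomes genuinely simple in an associated system --- correctly settles the normalization point the paper leaves implicit.
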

\begin{proof}
Since $\df_{\tilde e}$ is simple then there are two possibilities. If $\tilde v_z = 0$ for all $z \ne 0$ then the first condition of the proposition holds. The other possibility is that there exists a point $z_1$ such that $\tilde v_{z_1} \ne 0$. Using $v_{z_1} = 0$, we obtain coherency equalities for $z_1$:  $\tilde v_{z_1}(e) = 1+0$, $0 = 1+ \tilde  v_{z_1}(\tilde e)$. These last equalities lead to the second condition of proposition~\ref{th_3_1}. 
\end{proof}

Further we need the following technical lemma. The proof of the lemma is beyond the scope of our discussion and is given in Appendix.

\begin{lemma}\label{lm4}
Let $z_i \in \mathbb{K}$, $z_i \ne z_j$ for $i\ne j$, $a_k \in \mathbb{Z}$. Then $$\sum_{k = 1}^l  \dfrac{a_k}{t-z_k} = \lambda = \text{const}$$ if and only if $\lambda = 0$ and $a_k = 0$ for all $k$.
\end{lemma}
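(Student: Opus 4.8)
The plan is to clear denominators, turning the rational-function identity into a polynomial identity in $\KK[t]$, and then to recover each coefficient $a_k$ by evaluating at $t = z_k$.

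The forward implication is trivial: if $\lambda = 0$ and all $a_k = 0$, both sides vanish identically. For the converse, I would start from $\sum_{k=1}^l \frac{a_k}{t - z_k} = \lambda$ in $\KK(t)$ and multiply through by $\mu(t) := \prod_{j=1}^l (t - z_j)$ to obtain the polynomial identity $\sum_{k=1}^l a_k\, \mu_k(t) = \lambda\,\mu(t)$, where $\mu_k(t) = \prod_{j \ne k}(t - z_j)$ (the same notation as in \eqref{amm}). Substituting $t = z_i$ for a fixed $i$ kills every term $a_k \mu_k(t)$ with $k \ne i$, since each such $\mu_k$ has the factor $t - z_i$, and it also kills the right-hand side because $\mu(z_i) = 0$; what remains is $a_i\, \mu_i(z_i) = a_i \prod_{j \ne i}(z_i - z_j) = 0$. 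Since the $z_j$ are pairwise distinct, $\prod_{j \ne i}(z_i - z_j) \ne 0$, so $a_i = 0$; as $i$ was arbitrary, all $a_k$ vanish, and the identity then reads $\lambda\, \mu(t) = 0$, which forces $\lambda = 0$ because $\mu(t) \ne 0$.

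An equivalent route, which I would mention as a remark, is to argue via poles: if some $a_i \ne 0$, the left-hand side has a simple pole at $t = z_i$ with residue $a_i$, whereas the constant $\lambda$ is regular there, a contradiction. Either way the argument is entirely elementary and presents no real obstacle; the only point deserving a word is that pairwise distinctness of the $z_j$ is exactly what guarantees $\prod_{j \ne i}(z_i - z_j) \ne 0$, and the hypothesis $a_k \in \ZZ$ is irrelevant to the reasoning — arbitrary scalars would do.
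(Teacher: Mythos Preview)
Your proof is correct and follows essentially the same approach as the paper: clear denominators using $\mu(t)=\prod_j(t-z_j)$ and $\mu_k(t)=\prod_{j\ne k}(t-z_j)$, then evaluate the resulting polynomial identity at $t=z_i$ to force $a_i=0$, and finally conclude $\lambda=0$. The residue remark and the observation that $a_k\in\ZZ$ is unnecessary are accurate additions but do not change the argument.
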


Now we are ready to prove the main theorem

\begin{theorem}\label{main_result}
Homogeneous locally nilpotent derivations of horizontal type $\df_{e}$ and $\df_{\tilde e}$ on the algebra $A = A[Y,\mf{D}]$ commute if an only if they are coherent and $\df_{\tilde e}$ is simple in coordinates associated with $\df_{e}$ and $\df_{\tilde e}$.
\end{theorem}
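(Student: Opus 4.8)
The plan is to repeat the computation behind Proposition~\ref{th_3_1}, but now dropping the simplifications available in the adjacent case, so that the two marked points $z_0$ and $\tilde z_0 = 0$ and the two denominators $d,\tilde d$ are kept genuinely apart. First I would fix, exactly as in the opening of Section~\ref{s7} and by Corollary~\ref{AH-crl}, a system of generators associated with $\df_e$ and $\df_{\tilde e}$; in it $\df_e$ is automatically simple ($v_z = 0$ for $z \ne z_0$), while $\df_{\tilde e}$ is recorded by the colored vertices $\tilde v_0,\tilde v_{z_0},\tilde v_{z_1},\dots$ together with the family $\ph^m$. Since $[\df_e,\df_{\tilde e}]$ is again a homogeneous derivation of $\operatorname{Quot}(A)$, and $t$ together with the characters $\chi^m$ generate this field over $\KK$, the two derivations commute if and only if $[\df_e,\df_{\tilde e}](t) = 0$ and $[\df_e,\df_{\tilde e}](\chi^m) = 0$ for every $m$; by Lemma~\ref{lm3} this is precisely the pair of conditions $B = 0$ and $C_0 + C_1 + C_2 = 0$.

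The core of the argument is to turn these into polynomial identities in $t$: I would substitute the explicit forms~\eqref{amm} for $\alpha_m$ and $\alpha_m'$, the relations~\eqref{s} for $s$ and $\tilde s$, and clear the denominators $\mu(t)$ and $t - z_0$. From $B = 0$, vanishing of the residue at a point $z_k \notin \{0, z_0\}$ forces $\tilde v_{z_k}(e) + \tilde v_{z_k}(\tilde e) = 0$ there, and Lemma~\ref{lm4} rules out any cancellation of a nonzero residue against the polynomial part. Substituting $t = z_j$ into the polynomial form of $C_0 + C_1 + C_2 = 0$ gives $\tilde v_{z_j}(m)\bigl(1 + \tilde v_{z_j}(\tilde e)\bigr) = 0$, so $\tilde v_{z_j}(\tilde e) = -1$ and $\tilde v_{z_j}(e) = 1$ whenever $\tilde v_{z_j} \ne 0$; and the leading coefficient is a product of $\sum_k \tilde v_{z_k}(m)$ with $1/d + \sum_k \tilde v_{z_k}(\tilde e)$, which by~\eqref{ineq-v_z} can vanish only if either all the $\tilde v_{z_k}$ with $z_k \notin \{0,z_0\}$ are zero, or $d = 1$ and exactly one of them, say $\tilde v_{z_1}$, is nonzero, with $\tilde v_{z_1}(\tilde e) = -1$ and $\tilde v_{z_1}(e) = 1$. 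The residual polynomial part of $B$ together with the $t^2$-, $q^2$- and $tq$-coefficients of $C_0$ then yield linear relations tying $v_{z_0}(e), v_{z_0}(\tilde e), \tilde v_0(e), \tilde v_0(\tilde e), \tilde v_{z_0}(e), \tilde v_{z_0}(\tilde e), d, \tilde d, z_0$ together; feeding~\eqref{s} and~\eqref{ineq-v_z} into these leaves room for at most one further nonzero vertex among $\tilde v_0, \tilde v_{z_0}$ and, whenever $v_{z_k} \ne \tilde v_{z_k}$, turns both coherency inequalities attached to the point (the one in $e$ and the one in $\tilde e$, in their non-marked or marked form according to the point) into equalities. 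Reading these conclusions off: $B = 0$ and $C_0 + C_1 + C_2 = 0$ hold exactly when $\df_{\tilde e}$ is simple in the associated coordinates and $\df_e,\df_{\tilde e}$ are coherent. The reverse implication is the routine direction: assuming coherency and simplicity of $\df_{\tilde e}$, each of the identities above is verified by direct substitution into the formulas of Lemma~\ref{lm3}, exactly as in the adjacent case already treated in Proposition~\ref{th_3_1} and its Corollary, but with $z_0$ and $0$ no longer identified.

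I expect the essential difficulty to be organizational rather than conceptual. With the marked points separated one must handle several configurations in parallel --- $z_1$ coinciding with $z_0$ or not, the vertices $v_{z_0}, \tilde v_0, \tilde v_{z_0}$ possibly non-integral, $d \ne \tilde d$ --- and in each of them correctly recognize which of the four inequalities preceding the definition of coherence is the operative coherency inequality at a given point, then check that the polynomial constraints extracted from Lemma~\ref{lm3} collapse onto exactly those equalities and no others. The underlying algebra (expanding $B$ and $C_0 + C_1 + C_2$, clearing $\mu(t)$ and $t - z_0$, reading residues and leading coefficients) is mechanical once Lemmas~\ref{lm3} and~\ref{lm4} and the relations~\eqref{amm},~\eqref{s},~\eqref{ineq-v_z} are in hand, so I would push the longer manipulations into the Appendix, in the style already used for Lemma~\ref{lm3}.
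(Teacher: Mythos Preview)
Your proposal follows essentially the same route as the paper: reduce commutativity to $B=0$ and $C_0+C_1+C_2=0$ via Lemma~\ref{lm3}, then extract the coherency equalities and the simplicity of $\df_{\tilde e}$ by clearing the $\alpha$-denominators, evaluating at special values of $t$, and invoking Lemma~\ref{lm4}. The paper's organization differs in one useful respect: it splits explicitly on whether $z_0=0$, and in the case $z_0\ne 0$ it first separates the $z=z_0$ summand out of $\alpha_m$ and then \emph{iteratively} evaluates $B$ and $C_0+C_1+C_2$ at the two distinguished points $t=0$ and $t=z_0$ (obtaining for instance $\tilde s=0$ from $F(0)=0$), simplifies using the resulting relations, divides by $tq$, and evaluates at $0$ and $z_0$ again---this two-pass evaluation is what actually isolates the coherency equalities at the marked points, and your leading-coefficient formula, which is lifted verbatim from the adjacent case, will not be correct as stated once $z_0\ne 0$ or $v_0\ne\tilde v_0$ and should be replaced by that evaluation argument.
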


\begin{proof}
We consider two cases. The first one is the most difficult. We prove the necessity in both cases. The sufficiency follows from substituting the equations obtained at the end of each case.

\emph{Case 1: $z_0 \ne 0$.}

We suppose that $v_{z_0} \notin N$ and $\tilde v_0 \notin N$ (particularly, the vertex $v_{z_0} \ne 0$ and $\tilde v_0 \ne~0$). Otherwise one of the derivations has no marked points and any point can be chosen as being marked. Thus, we may assume $z_0 = 0$.

The commutator of derivations is zero if and only if its application to  $t$ and $\chi$ is zero. By Lemma~\ref{lm3} the application of the commutator to $t$ is equivalent to the fact that (\ref{dt}) vanishes as a polynomial of $t$.

Let us extract terms with $z = z_0$ from $- (\alpha_e+\alpha_{\tilde e})q$ in~(\ref{dt}): 
$$
\Bigl(\tilde{v}_{z_0}(e+\tilde e)\dfrac{t}{t-z_0}\Bigr)(t-z_0) = t \cdot \tilde{v}_{z_0}(e+\tilde e).
$$
By horizontal line above alpha we denote the sum without term with $z = z_0$:  
$$
\bar\alpha_e = -t\cdot\sum_{z_k \ne 0,\, z_0} \tilde{v}_{z_k}(e)\dfrac{\mu_k(t)}{\mu(t)}. 
$$
Then from (\ref{dt}) it follows that
$$
B_{\phantom{0}} = (\tilde{v}_0(e)-\tilde s - v_{z_0}(\tilde e)+s+\tilde{v}_{z_0}(e+\tilde e))t - (\tilde{v}_0(e)-\tilde s-1)z_0 - (\bar\alpha_e+\bar\alpha_{\tilde e})q.
$$
If $B$ is zero then $B(0)=0$ and $B(z_0)=0$. Then using $\bar\alpha_e(0)=0$ we obtain
\begin{align}
 \tilde{v}_0(e)-\tilde s - 1 &= 0 \label{lt1}\\
 1 - v_{z_0}(\tilde e)+s+\tilde{v}_{z_0}(e+\tilde e) &= 0.\label{lt2}
\end{align}
Now let us consider $[\df_{e},\df_{\tilde e}](\chi^m) = 0$. We repeat the previous procedure with (\ref{C_1}) and (\ref{C_2}). Further, $C_1, C_2$ will have the form
\begin{align*}
  C_{1} &= 
 -\tilde{v}_0(m)\tilde{v}_{z_0}(\tilde e)tq+\tilde s \tilde{v}_{z_0}(m) tq + v_{z_0}(\tilde e)\tilde{v}_{z_0}(m) t^2 - v_{z_0}(m)\tilde{v}_{z_0}(e)t^2+ \\
  &\phantom{==} \tilde{v}_0(m)\bar\alpha_{\tilde e}q^2-\tilde s\bar\alpha_m q^2 - v_{z_0}(\tilde e)\bar\alpha_m tq + v_{z_0}(m)\bar\alpha_e tq \\
 C_{2} &= -t z_0\tilde{v}_{z_0}(m)  - \tilde{v}_{z_0}(m)\tilde{v}_{z_0}(\tilde e)t^2
 -t\bar\alpha'_m q^2 - \bar\alpha_m\bar\alpha_{\tilde e}q^2.
\end{align*}
The condition $[\df_{e},\df_{\tilde e}](\chi^m) = 0$ is equivalent to $F(t) := C_0+C_1+C_2 = 0$. Hence, $F(0)=0$ and $F(z_0)=0$ implies the following equalities:
\begin{align}
 \tilde{s} \tilde{v}_0(m) &= 0 \label{lx1}\\
 -v_{z_0}(m)\bigl(\tilde v_{z_0}(e) + s\bigr) - \tilde{v}_{z_0}(m)\bigl(-v_{z_0}(\tilde e)+1+\tilde{v}_{z_0}(\tilde e)\bigr) &= 0.\label{lx2}
\end{align}
From (\ref{lx1}) it follows that $ \tilde{s} = 0$. By substituting $ \tilde{s} = 0$ in (\ref{s})  and (\ref{lt1}) we have 
\begin{align}\label{fin1}
\tilde{v}_0(\tilde e) = -1/\tilde d, \quad \tilde{v}_0(e) = 1.
\end{align}
Note that by (\ref{lt2}) expressions in brackets at $v_{z_0}(m)$ and $\tilde{v}_{z_0}(m)$ is equal in (\ref{lx2}). It follows that $\bigl(v_{z_0}(m)-\tilde{v}_{z_0}(m)\bigr)\bigl(\tilde v_{z_0}(e) + s\bigr) = 0$. Finally, by (\ref{lt2}) and (\ref{lx2}) we obtain
\begin{align}\label{fin2}
  \tilde v_{z_0}(e) + s = 0, \quad  v_{z_0}(\tilde e) - \tilde{v}_{z_0}(\tilde e) = 1.
\end{align}
Now we can simplify the equality $C_0 + C_1 + C_2 = 0$. Let us group terms without $\bar\alpha$ as in the following:
\begin{align*}
  -v_{z_0}(m)\bigl(st^2+tq+\tilde{v}_{z_0}(e)t^2\bigr)+
 \tilde{v}_{z_0}(m)\bigl( v_{z_0}(\tilde e) t^2  - \tilde{v}_{z_0}(\tilde e)t^2 -tz_0 \bigr)\\  +  \tilde{v}_0(m)\bigl( v_{z_0}(\tilde e)tq- \tilde{v}_{z_0}(\tilde e)tq  \bigr)
 =-v_{z_0}(m)tq + 
 \tilde{v}_{z_0}(m)\bigl( t^2  - tz_0 \bigr) +
 \tilde{v}_0(m)tq = \\ =
 \bigl(  \tilde{v}_{z_0}(m)-v_{z_0}(m) +\tilde{v}_0(m) \bigr)tq.
 \end{align*}
Thus, $C_0+C_1+C_2 = 0$ has the form
\begin{multline*}
\bigl(  \tilde{v}_{z_0}(m)-v_{z_0}(m) +\tilde{v}_0(m) \bigr)tq + \tilde{v}_0(m)\bar\alpha_{\tilde e}q^2-v_{z_0}(\tilde e)\bar\alpha_m tq + \\ + v_{z_0}(m)\bar\alpha_e tq -t\bar\alpha'_m q^2 - \bar\alpha_m\bar\alpha_{\tilde e}q^2 = 0.
\end{multline*}
Note that $\bar\alpha_m$ can be expressed as $\bar\alpha_m = t \cdot \bar{\bar\alpha}_m$, where $\bar{\bar\alpha}_m$ denotes the sum $-\sum_{z \ne 0,\, z_0} \tilde{v}_z(m) \dfrac{1}{t-z}$. Let us repeat this procedure with all terms containing $\bar\alpha$ in the last equation. Dividing by $tq$ we have
$$
\tilde{v}_{z_0}(m)-v_{z_0}(m) +\tilde{v}_0(m) + \tilde{v}_0(m)\bar{\bar\alpha}_{\tilde e}q-v_{z_0}(\tilde e)\bar{\bar\alpha}_m t  + v_{z_0}(m)\bar{\bar\alpha}_e t -\bar\alpha'_m q - \bar{\bar\alpha}_m\bar{\bar\alpha}_{\tilde e}q = 0.
$$
Substituting $0$ and $z_0$ for $t$, we obtain
\begin{align}
\tilde{v}_{z_0}(m)-v_{z_0}(m) +\tilde{v}_0(m) - z_0 \tilde{v}_0(m)\bar{\bar\alpha}_{\tilde e} + z_0\bar\alpha'_m = 0 \label{11}\\
\tilde{v}_{z_0}(m)-v_{z_0}(m) +\tilde{v}_0(m) - z_0 v_{z_0}(\tilde e)\bar{\bar\alpha}_m  + v_{z_0}(m)\bar{\bar\alpha}_e z_0 = 0.\label{22}
 \end{align}
Now if we recall Lemma~\ref{lm4} and group all terms with $\alpha$ in (\ref{22}) then we get $\tilde{v}_{z_0}(m)-v_{z_0}(m) +\tilde{v}_0(m) = 0$ for all $m \in M$, that is $v_{z_0} - \tilde{v}_{z_0}=\tilde{v}_0$. Note also that this implies $d=\tilde d$.

Then (\ref{11}) is given by $z_0 \cdot \tilde{v}_0(m)\bar{\bar\alpha}_{\tilde e} - z_0\cdot \bar\alpha'_m = 0$. Using (\ref{amm}) and dividing by $z_0$ we have
$$
\sum_{k = 1}^l  \tilde v_0(m)\tilde{v}_{z_k}(\tilde e)\dfrac{\mu_k(t)}{\mu(t)} -
\sum_{k = 1}^l  z_k\tilde{v}_{z_k}(m)\dfrac{\mu_k(t)^2}{\mu(t)^2} = 0.
$$
Then for the numerator of this fraction we obtain
$$
\sum_{k = 1}^l  \tilde v_0(m)\tilde{v}_{z_k}(\tilde e)\mu_k(t)\mu(t) -
\sum_{k = 1}^l  z_k\tilde{v}_{z_k}(m)\mu_k(t)^2 = 0.
$$
Substituting $z_j$ for $t$ we get
$$
z\tilde{v}_{z_k}(m)\mu_j(z_j)^2 = 0.
$$
Since $z_i \ne z_j$ for $i\ne j$, it follows that $\mu_j(z_j) \ne 0$. Consequently, $\tilde{v}_{z} = 0$ for $z \ne 0, \, z_0$. This prove that $\df_{\tilde e}$ is simple.

Finally, combining (\ref{fin1}) and (\ref{fin2}), we obtain necessity conditions:
\begin{align*}
\tilde{v}_0(\tilde e) = -1/d; \quad \tilde{v}_0(e) = 1; \quad
\tilde v_{z_0}(e) -v_{z_0}(e)  = 1/d; \quad v_{z_0}(\tilde e) - \tilde{v}_{z_0}(\tilde e) = 1.
\end{align*}
It is easy to see that these conditions are coherency equalities for derivations at points $z_0$ and $\tilde z_0 = 0$. Substitution of these conditions in (\ref{com-t}) and (\ref{com-chi}) proves the sufficiency. 

\emph{Case 2: $z_0 = 0$.}

For $ z_0 = 0$ expressions (\ref{com-t}) and (\ref{com-chi}) have the form
\begin{align*}
[\df_{e},\df_{\tilde e}](t) &= d \tilde d \cdot \ph^{\tilde e}\chi^{e+\tilde e} \cdot (\tilde{v}_0(e)-\tilde s - v_{0}(\tilde e)+s-\alpha_e-\alpha_{\tilde e})t \\
[\df_{e},\df_{\tilde e}](\chi^m) &= d\tilde d\cdot t^{s+\tilde s}  \cdot \ph^{\tilde e}\cdot \chi^{m+e+\tilde e} \bigl(C_0 + C_{1} + C_{2}\bigr),
\end{align*}
where
\begin{align*}
 C_{0} &= \tilde s\tilde{v}_0(m)  - sv_{0}(m) +\tilde{v}_0(m) v_{0}(\tilde e)- v_{0}(m)\tilde{v}_0(e)\\
 C_{1} &= \tilde{v}_0(m)\alpha_{\tilde e}-\tilde s\alpha_m  - v_{0}(\tilde e)\alpha_m  + v_{0}(m)\alpha_e  \\
 C_{2} &= -t\alpha'_m - \alpha_m\alpha_{\tilde e}.
\end{align*}
Consequently, the commutator is zero if and only if the following conditions hold:
\begin{equation}\label{sys0}
    \begin{cases}
    \tilde{v}_0(e)-\tilde s - v_{0}(\tilde e)+s-\alpha_e-\alpha_{\tilde e} = 0 \\
    C_0 + C_{1} + C_{2} = 0.
    \end{cases}
\end{equation}
Substituting $0$ for $t$ in both equations we obtain the necessity conditions:
\begin{equation}\label{sys1}
    \begin{cases}
    \tilde{v}_0(e)-\tilde s - v_{0}(\tilde e)+s = 0 \\
    \tilde s\tilde{v}_0(m)  - sv_{0}(m) +\tilde{v}_0(m) v_{0}(\tilde e)- v_{0}(m)\tilde{v}_0(e) = 0.
    \end{cases}
\end{equation}
Then, the first equalities of the systems (\ref{sys0}) and (\ref{sys1}) implies that $\alpha_e+\alpha_{\tilde e}=0$. By Lemma~\ref{lm4} it is equivalent to 
\begin{equation}\label{fin3}
 \tilde{v}_z(e+\tilde e) = 0, \quad    z \ne 0.
\end{equation}
The second equation of the system (\ref{sys1}) has the form 
$$
\tilde s\tilde{v}_0(m)  - sv_{0}(m) +\tilde{v}_0(m) v_{0}(\tilde e)- v_{0}(m)\tilde{v}_0(e) = \tilde v_0(m) \bigl(v_{0}(\tilde e)+\tilde s\bigr) - v_0(m)\bigl(\tilde{v}_0(e)+ s  \bigr).
$$
By the first equation of (\ref{sys1}) expressions in the brackets at $\tilde v_0(m)$ and $v_0(m)$ are equal. For (\ref{sys1}) let us substitute expressions in (\ref{s}) for $s$ and $\tilde s$ . Then we have
\begin{equation}\label{sys2}
    \begin{cases}
        \langle v_0-\tilde{v}_0, e+\tilde e \rangle = 1/ \tilde{d} - 1/d\\
        \langle\tilde v_0-v_0, m\rangle \cdot \bigl(\tilde{v}_0(e)+ s\bigr)=0.
    \end{cases}
\end{equation}

Further the second equation in (\ref{sys2}) implies two cases: $\tilde v_0 = v_0$ or $\tilde{v}_0(e)+ s = 0$. The first case corresponds to item 1 of Proposition~\ref{th_3_1}. Hence, it is enough to consider the second case to complete the proof.

Using (\ref{sys1}) we have $v_{0}(\tilde e)+\tilde s = 0$. Substituting expressions in (\ref{s}) for $s$ and $\tilde s$ we obtain 
\begin{equation}\label{fin4}
d\tilde v_0(e) = 1 + d v_0(e), \quad \tilde d v_0(\tilde e) = 1 + \tilde d v_0(\tilde e).
\end{equation}
Moreover, the second equation of (\ref{sys0}) has the form
\begin{equation*} \label{eq2}
\tilde{v}_0(m)\alpha_{\tilde e}-v_0(m)\alpha_e - t\alpha'_m - \alpha_m\alpha_{\tilde e} = 0.
\end{equation*}
As in the previous case of the theorem considering the numerator and substituting $z_j$ for $t$ we obtain $\tilde{v}_{z_j}(m)(1+\tilde{v}_{z_j}(\tilde e))=0$. Consequently, by~(\ref{ineq-v_z}) it follows that $\tilde{v}_{z_j}(\tilde e) = -1$. The coefficient at leading monomial should be zero. This implies
\begin{align*}
\sum_{k = 1}^l  \bigl(\tilde{v}_0(m)\tilde{v}_{z_k}(\tilde e)+v_0(m)\tilde{v}_{z_k}(e)\bigr) = \sum_{k = 1}^l\tilde{v}_{z_k}(m)\sum_{k = 1}^l\tilde{v}_{z_k}(\tilde e).
\end{align*}
From \label{fin3} it follows that $\tilde{v}_{z_k}(e) = -\tilde{v}_{z_k}(\tilde e) = 1$. Hence
\begin{align}\label{eq5}
l \cdot \Bigl(\tilde{v}_0(m)-v_0(m) - \sum_{k = 1}^l\tilde{v}_{z_k}(m)\Bigr)=0.
\end{align}
Thus, if $l = 0$ we have case 1 of Theorem~\ref{th_3_1}. Otherwise the expression in brackets in (\ref{eq5}) is zero. Then  $\tilde{v}_0-v_0 = \sum_{k = 1}^l\tilde{v}_{z_k} \in N$. Hence $d=\tilde d$ and
$$
-1/d = \langle \tilde{v}_0- v_0, \tilde e\rangle = \langle \sum_{k = 1}^l\tilde{v}_{z_k}, \tilde e\rangle = \sum_{k = 1}^l\tilde{v}_{z_k}( \tilde e) \in \ZZ.
$$
Consequently, $d=1$ and there exists a unique point $z_1$, such that $\tilde{v}_{z_1} \ne 0$ and $\tilde{v}_{z_1}(\tilde e) = -1$. It follows that $\df_{\tilde e}$ is simple. Using the previous equations we have $\tilde{v}_{z_1}(e) = 1$. Therefore, we obtain coherency equalities at the point $z_1$. Finally, coherency equalities at the point $\tilde z_0 = 0$ is given by (\ref{fin4}). This completes the proof.
\end{proof}

Combining all cases being described in the theorem we obtain

\begin{corollary}\label{crl_main}
Homogeneous locally nilpotent derivations of horizontal type $\df_{e}$ and $\df_{\tilde e}$ commute if and only if one of the following five conditions hold in associated generators system.
\begin{enumerate}
    \item $\tilde v_z = 0$ for $z \ne 0$, $z_0 = 0$,  $\tilde z_0 = 0$ and
    \begin{enumerate}
        \item $v_0=\tilde v_0;$ or
        \item $d\tilde v_0(e) = 1 + d v_0(e)$, $ \tilde d v_0(\tilde e) = 1 + \tilde d \tilde v_0(\tilde e);$
    \end{enumerate}
    \item $\tilde v_z = 0$ for $z \ne 0$ and $z\ne z_1$, $z_0 = 0$, $\tilde z_0 = 0$ \\
     $v_0 \in N$, $\tilde v_0 \in N$, $\tilde v_{z_1}(e) = 1$, $\tilde v_{z_1}(\tilde e) = -1;$ and
    \begin{enumerate}
        \item $v_0=\tilde v_0;$ or
        \item $\tilde v_0(e) = 1 + v_0(e)$, $v_0(\tilde e) = 1 + \tilde v_0(\tilde e);$
    \end{enumerate} 
    \item $\tilde v_z = 0$ for $z \ne 0$ and $z\ne z_0$, $z_0 \ne 0$, $\tilde z_0 = 0$ and\\
    $d \tilde v_{z_0}(e) = 1 + d v_{z_0}(e)$, $v_{z_0}(\tilde e) = 1 + \tilde v_{z_0}(\tilde e)$, $\tilde v_0(e) = 1$, $d \tilde v_0(\tilde e) = -1$, $d = \tilde d.$
\end{enumerate}
\end{corollary}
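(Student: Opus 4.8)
The plan is to derive Corollary~\ref{crl_main} directly from Theorem~\ref{main_result}, which already reduces commutation of two horizontal-type derivations to the two conditions ``$\df_e$ and $\df_{\tilde e}$ are coherent'' and ``$\df_{\tilde e}$ is simple in the associated system of generators''; so it only remains to unwind these into the explicit numerical list and to check the enumeration is exhaustive. I would work throughout in a system of generators associated with $\df_e$ and $\df_{\tilde e}$, so that $\tilde z_0 = 0$ and, by Corollary~\ref{AH-crl}, $v_z = 0$ for every $z \ne z_0$. Simplicity of $\df_{\tilde e}$ then says at most one of its colored vertices $\tilde v_z$ is nonzero, and since $v_z = 0$ for $z \ne z_0$ the only points that can matter are $z = 0$ (the marked point of $\df_{\tilde e}$), $z = z_0$ (when $z_0 \ne 0$), and at most one further auxiliary point $z_1 \notin \{0,z_0\}$.

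The core of the argument is a finite case analysis, organized first by whether the marked points coincide ($z_0 = 0$ versus $z_0 \ne 0$) and then by the location of the single admissible nonzero colored vertex of $\df_{\tilde e}$. If $z_0 \ne 0$, the marked points differ; at $z_0$ the vertex $v_{z_0}$ need not lie in $N$ whereas $\tilde v_{z_0}$, being a vertex at a non-marked point of $\df_{\tilde e}$, does, and symmetrically at $0$, so the ``$v = \tilde v$'' alternative of coherency is ruled out at $z_0$ and at $0$ and both coherency inequalities there must be equalities. Simplicity then forces the exceptional vertex of $\df_{\tilde e}$ to sit at $0$ and kills $\tilde v_{z_0}$ and every $\tilde v_z$ with $z \ne 0$; writing out the four coherency equalities and using $v_0 = 0$ produces exactly item~(3) together with $d = \tilde d$, which is precisely the output of Case~1 of the proof of Theorem~\ref{main_result}. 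If $z_0 = 0$, the marked points coincide; when all $\tilde v_z$ vanish, coherency at each $z \ne 0$ is automatic and coherency at $0$ is either $v_0 = \tilde v_0$ or the pair $d\tilde v_0(e) = 1 + dv_0(e)$, $\tilde d v_0(\tilde e) = 1 + \tilde d\tilde v_0(\tilde e)$, which are items (1)(a) and (1)(b); when instead a unique $\tilde v_{z_1}$ with $z_1 \ne 0$ is nonzero, coherency at $z_1$ (where $v_{z_1} = 0 \ne \tilde v_{z_1}$) forces $\tilde v_{z_1}(e) = 1$ and $\tilde v_{z_1}(\tilde e) = -1$, the lattice conditions $v_0,\tilde v_0 \in N$ fall out as in Case~2 of the proof of Theorem~\ref{main_result}, and coherency at $0$ again splits into $v_0 = \tilde v_0$ or the pair $\tilde v_0(e) = 1 + v_0(e)$, $v_0(\tilde e) = 1 + \tilde v_0(\tilde e)$, giving items (2)(a) and (2)(b).

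For the converse I would simply verify, for each of the five listed configurations, that $\df_{\tilde e}$ is visibly simple and that every point of the divisor meets one of the two coherency alternatives, so that Theorem~\ref{main_result} applies. The hard part will be the bookkeeping rather than any new idea: at each marked point one must keep straight which of its two coherency inequalities is the one depending on $e$ and which the one depending on $\tilde e$ (this is dictated by which derivation the point is marked for), and one must check that the side conditions $v_0 \in N$, $\tilde v_0 \in N$ appearing in items (2) and (3) are genuinely forced and not extra hypotheses --- this is exactly where the divisibility steps in the proof of Theorem~\ref{main_result} (an integer $d$ dividing a pairing value and hence equalling $1$, and the like) get reused. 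Beyond Theorem~\ref{main_result} and the definition of coherency, no new estimates or constructions should be needed.
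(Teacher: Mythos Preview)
Your overall strategy—reading the corollary off from Theorem~\ref{main_result} by a case split on whether $z_0 = 0$ and on where the one admissible nonzero $\tilde v_z$ sits—is exactly what the paper does (it simply says ``combining all cases being described in the theorem''), and your treatment of the $z_0=0$ branch into items (1)(a)(b) and (2)(a)(b) is fine.

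The gap is in the $z_0 \ne 0$ branch. You write that ``simplicity then forces the exceptional vertex of $\df_{\tilde e}$ to sit at $0$ and kills $\tilde v_{z_0}$ and every $\tilde v_z$ with $z \ne 0$.'' That is too strong, and it does not reproduce item~(3). Look at the statement of item~(3): it only requires $\tilde v_z = 0$ for $z \ne 0$ \emph{and} $z \ne z_0$, and the coherency equalities at $z_0$ are written with $\tilde v_{z_0}$ still present, namely $d\,\tilde v_{z_0}(e) = 1 + d\,v_{z_0}(e)$ and $v_{z_0}(\tilde e) = 1 + \tilde v_{z_0}(\tilde e)$. If you set $\tilde v_{z_0}=0$ as you propose, the first of these collapses to $d\,v_{z_0}(e) = -1$, which is strictly more restrictive than what item~(3) asserts. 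In fact, Case~1 of the proof of Theorem~\ref{main_result} derives the relation $v_{z_0} - \tilde v_{z_0} = \tilde v_0$, so $\tilde v_{z_0}$ is generically nonzero (and is determined rather than killed). The paper's use of the word ``simple'' here is admittedly loose relative to its own definition, but what actually comes out of Case~1 is $\tilde v_z = 0$ for $z \notin \{0, z_0\}$, with both $\tilde v_0$ and $\tilde v_{z_0}$ surviving. You should carry $\tilde v_{z_0}$ through the coherency equalities at $z_0$ rather than annihilating it; then the four equalities, together with the identity $v_{z_0} - \tilde v_{z_0} = \tilde v_0$ (which also yields $d = \tilde d$), give exactly item~(3).

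A smaller point: your argument that the ``$v = \tilde v$'' alternative is excluded at $z_0$ and at $0$ rests on $v_{z_0} \notin N$ and $\tilde v_0 \notin N$, which is not automatic—it is the standing hypothesis of Case~1, justified there by noting that otherwise the marked point can be moved and one falls back into the $z_0=0$ branch. You should make that reduction explicit rather than saying ``need not lie in $N$''.
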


\begin{example}
Let us illustrate the result on Example~\ref{ex01d}. Recall that we consider a variety defined by proper polyhedral divisor  $\mf{D} = [0,1]\cdot\{0\}$ over affine line. Besides, homogeneous locally nilpotent derivations of horizontal type corresponding to points $0$ and $1$ give two families of derivations:
\begin{align*}
    \df_0(t^r\chi^m) &=  r t^{r-1} \chi^{m+e} \\
    \df_1(t^r\chi^m) &= (m+r) t^{r+\tilde s} \chi^{m+\tilde e}.
\end{align*}
Since $z_0=\tilde z_0 = 0$, the example corresponds to item 2 of Corollary~\ref{crl_main}. The derivations under consideration correspond to different marked vertices. Hence, they commute if and only if coherency equalities hold. Then we have $e = 1 + 0$ and $0 = 1 + \tilde e$, that is $e=1$, $\tilde e = -1$ and $\tilde s = 0$. Thus, derivations in variables $x, y$ commute if and only if $a=b=0$ in (\ref{ex1}). Besides, if both derivations belong to one family then they commute because they have the same marked vertices. Corresponding $\Ga^2$-actions have the form
\begin{align*}
   &(1) \quad x \mapsto x + \l, \quad y \mapsto y + \m \\
   &(2) \quad x \mapsto x, \quad y \mapsto y + \l x^{a_1} + \m x^{a_2}\\
   &(3) \quad x \mapsto x + \l y^{b_1} + \m y^{b_2}, \quad y \mapsto y.
\end{align*}
\end{example}

\begin{example}
Now let us consider the derivations from Example~\ref{ex02d}. Note again that if both derivations belong to one family then they commute because they have the same marked vertices. If derivations belong to families (1) and (3) respectively then the corresponding vertices of polyhedron $\Delta_1$ are equal. Coherency equations for $\Delta_0$ have the form $a_1+4b_1 = -4, a_3+4b_3 = 1$. Then we obtain
\begin{align*}
    &(1) \:\: \df_1 = x_3^{b_1} \frac{\df}{\df x_1} - x_3^{b_1-1} (2 x_1 x_2^4 + 1)\frac{\df}{\df x_4} , b_1 \ge 1 \\ 
    &(3) \:\: \df_3 = x_3^{b_3}\frac{\df}{\df x_2} - 4 x_1^2 x_2^3 x_3^{b_3-1} \frac{\df}{\df x_4}, b_3 \ge 1. 
\end{align*}
The corresponding $\Ga^2$-action has the form
\begin{align*}
    & x_1 \mapsto x_1 + \l \cdot x_3^{b_1}, \quad x_2 \mapsto x_2 + \m \cdot x_3^{b_3}, \quad x_3 \mapsto x_3 \\
    & x_4 \mapsto x_4  -  \m \cdot 4 x_1^2 x_2^3 x_3^{b_3-1} - \l \cdot x_3^{b_1-1} \bigl(2 x_1 x_2^4 + 1 + 8 \m \cdot  x_1 x_2^3 x_3^{b_3} \bigr) + \\
    &- \l \cdot x_3^{b_1-1} \bigl(12 \m^2 \cdot x_1 x_2^2 x_3^{2b_3} + 4 \m^3 \cdot x_1 x_2 x_3^{3b_3} + \frac{\m^4}{6}\cdot x_1 x_3^{4 b_3} \bigr) ,\\
    \:& b_1 \ge 1, \: b_3 \ge 1.
\end{align*}
If derivations belong to families (2) and (4) respectively then coherency equations have the form $a_2+8 b_2 = -4, a_4 + 8 b_4 = 1$. Corresponding $\Ga^2$-action are the following:
\begin{align*}
    & x_1 \mapsto x_1 - \l \cdot x_4^{-b_2},
    \quad x_2 \mapsto x_2 + \m \cdot x_4^{-b_4}, \quad x_4 \mapsto x_4,
    \\& x_3 \mapsto x_3 + 4\m \cdot x_1 x_2 x_3^{-b_4-1} + \l \cdot  x_4^{-b_2-1} \bigl(2 x_1 x_2^4 + 1 + \\ 
    & + 8 \m \cdot x_1 x_2^3 x_4^{-b_4} + 12 \m^2 \cdot x_1 x_2^2 x_4^{-2b_4} + 4 \m^3 \cdot x_1 x_2 x_4^{-3b_4} + \frac{\m^4}{6}\cdot x_1 x_4^{-4b_4} \bigr) , \\
    & 
    \quad b_2 \le -1, \: b_4 \le -1.
\end{align*}
Applying Theorem~\ref{main_result} to pairs (1) and (2), (1) and (4), (2) and (3), (3) and (4), we obtain that in cases (1) and (4), (2) and (3) derivations do not commute. For the other cases similar calculations can be performed.
Therefore, we obtain the actions of the group $\TT^2 \rightthreetimes \mathbb{G}_a^2$ on the hypersurface (\ref{ex2-alg-iso}) with open orbit.%
\end{example}%

\section{Appendix}\label{s8}

\begin{prooff}[\textsc{of Lemma \ref{lm3}}]
We compute the derivation on functions $t$, $q$, $\chi^m$, $\ph^m$. Substituting $0$ for $m$ and $r$ in (\ref{f-hor}) we have
\begin{align*}
\df_e(t) &= \df_e(q) = dq^{s+1}\chi^e \\
\df_{\tilde e}(t) &= \df_{\tilde e}(q) = \tilde d t^{\tilde s+1}\ph^{\tilde e}\chi^{\tilde e} \\
\df_e(\chi^m) &= dv_{z_0}(m)q^s\chi^{m+e}.
\end{align*}
Using the chain rule for derivations and previous notation $\alpha_m = t\dfrac{(\ph^m)'}{\ph^m}$ we have
\begin{align*}
\df_e(\ph^m) &= (\ph^m)'\df_e(t) =d(\ph^m)'q^{s+1}\chi^e\\
\df_{\tilde e}(\ph^m) &= (\ph^m)'\df_{\tilde e}(t) = (\ph^m)'\cdot \tilde d t^{\tilde s+1}\ph^{\tilde e}\chi^{\tilde e} =\tilde d\alpha_m t^{\tilde s} \ph^{m+\tilde e}\chi^{\tilde e}.
\end{align*}
\noindent By Leibniz rule we obtain $\df_{\tilde e}(\ph^m\chi^m)=\df_{\tilde e}(\ph^m)\chi^m + \ph^m\df_{\tilde e}(\chi^m) = \tilde{d}\tilde{v}_{0}(m) \cdot t^{\tilde{s}} \cdot \ph^{m+\tilde{e}}\chi^{m+\tilde{e}} $. It follows that
$$
\df_{\tilde e}(\chi^m) = \tilde d (\tilde{v}_0(m)-\alpha_m)t^{\tilde s}\ph^{\tilde e}\chi^{m+\tilde e}.
$$
Then the commutator of derivations on function $t$ has the form
\begin{multline*}
[\df_{e},\df_{\tilde e}](t) = \df_{e}(\df_{\tilde e}(t))-\df_{\tilde e}(\df_{e}(t)) = \df_{e}(\tilde d t^{\tilde s+1}\ph^{\tilde e}\chi^{\tilde e}) - \df_{\tilde e}(dq^{s+1}\chi^e) = \\ \tilde d (\tilde s +1) t^{\tilde s}\cdot \ph^{\tilde e} \chi^{\tilde e}\cdot \df_e(t) + \tilde d t^{\tilde s +1} \cdot \chi^{\tilde e}\cdot \df_e(\ph^{\tilde e}) + \tilde d t^{\tilde s +1} \cdot \ph^{\tilde e}\cdot \df_e(\chi^{\tilde e}) + \\ -d(s+1)\cdot q^s\cdot \chi^e \cdot \df_{\tilde e}(q) - d\cdot q^{s+1} \cdot \df_{\tilde e}(\chi^e).
\end{multline*}
Using the previous equalities we have
\begin{multline*}
[\df_{e},\df_{\tilde e}](t) = d\tilde d  \cdot (\tilde s +1) \cdot t^{\tilde s} q^{s+1}\cdot \ph^{\tilde e} \chi^{e+\tilde e} + d\tilde d \cdot t^{\tilde s}q^{s+1} \cdot \ph^{\tilde e}\chi^{\tilde e}\cdot \alpha_{\tilde e} + \\ + d\tilde d\cdot v_{z_0}(\tilde e) \cdot t^{\tilde s +1}q^s \cdot \ph^{\tilde e}\chi^{e+\tilde e} -d\tilde d\cdot(s+1)\cdot t^{\tilde s+1}\cdot q^s\cdot \ph^{\tilde e} \chi^{e+\tilde e} \cdot \df_{\tilde e}(q) + \\ - d\tilde d\cdot(\tilde v_0(e)-\alpha_e)\cdot t^{\tilde s} q^{s+1} \cdot \ph^{\tilde e} \chi^{e+\tilde e}.
\end{multline*}
The last step is factoring out the term $d \tilde d \cdot t^{\tilde s} q^s \cdot \ph^{\tilde e}\chi^{e+\tilde e}$.
Similarly using Leibniz rule we compute $[\df_e,\df_{\tilde e}](\chi^m)$ and obtain
\begin{align*}
[\df_e,\df_{\tilde e}](\chi^m) &= \df_{e}(\df_{\tilde e}(\chi^m))-\df_{\tilde e}(\df_{e}(\chi^m)) \\
=  \df_{e}(\tilde d (\tilde{v}_0(&m)-\alpha_m)t^{\tilde s}\ph^{\tilde e}\chi^{m+\tilde e}) - \df_{\tilde e}(dv_{z_0}(m)q^s\chi^{m+e})= \\
&=\tilde d \tilde{v}_0(m)\Bigl(\tilde s t^{\tilde s-1}dq^{s+1}\chi^e\Bigr)\ph^{\tilde e}\chi^{m+\tilde e}+\tilde d \tilde{v}_0(m)t^{\tilde s}\Bigl((\ph^{\tilde e})'dq^{s+1}\chi^e\Bigr)\chi^{m+\tilde e}+ \\
&+\tilde d \tilde{v}_0(m)t^{\tilde s}\ph^{\tilde e}\Bigl(dv_{z_0}(m+\tilde e)q^s\chi^{m+e+\tilde e}\Bigr)-\tilde d \alpha_m\Bigl(\tilde s t^{\tilde s-1}dq^{s+1}\chi^e\Bigr)\ph^{\tilde e}\chi^{m+\tilde e}+\\
&-\tilde d \alpha_m t^{\tilde s}\Bigl((\ph^{\tilde e})'dq^{s+1}\chi^e\Bigr)\chi^{m+\tilde e}-\tilde d \alpha_m t^{\tilde s}\ph^{\tilde e}\Bigl(dv_{z_0}(m+\tilde e)q^s\chi^{m+e+\tilde e}\Bigr)+\\
&-\tilde d \Bigl(\alpha'_m dq^{s+1}\chi^e\Bigr)t^{\tilde s}\ph^{\tilde e}\chi^{m+\tilde e}- d v_{z_0}(m)\Bigl(sq^{s-1} \tilde d t^{\tilde s+1}\ph^{\tilde e}\chi^{\tilde e}\Bigr)\chi^{m+e}+\\
&- d v_{z_0}(m) q^{s}\Bigl(\tilde d (\tilde{v}_0(m+e)-\alpha_{m+e})t^{\tilde s}\ph^{\tilde e}\chi^{m+e+\tilde e}\Bigr).
\end{align*}
After factoring out the term $d\tilde d\cdot t^{\tilde s-1} q^{s-1} \cdot \ph^{\tilde e}\cdot \chi^{m+e+\tilde e}$ we have what is needed. This completes the proof.
\end{prooff}

\begin{prooff}[\textsc{of Lemma \ref{lm4}}]
By $\mu(t)$ denote $\prod_{k=1}^l (t-z_k)$, $\mu_k(t) = \prod_{i\ne k} (t-z_i)$. Then we obtain
$$
\sum_{k = 1}^l  a_k\dfrac{\mu_k(t)}{\mu(t)} - \lambda = 0.
$$
The fraction vanishes if and only if the numerator vanishes:
$$
\sum_{k = 1}^l  a_k\mu_k(t) - \lambda \cdot \mu(t) = 0 .
$$
Substituting $z_j$ for $t$ and using the fact that $\mu_k(z_j) = 0$ for $j \ne k$ we have
$$
a_j\mu_j(z_j) = a_j  \prod_{j \ne i}(z_j-z_i) = 0.
$$
$z_i \ne z_j$ for $i \ne j$. It follows that $a_j=0$ for all $j$ and then $\lambda = 0$. This concludes the proof.
\end{prooff}

\end{document}